\documentclass[11pt]{article}

\usepackage[T1]{fontenc}
\usepackage[utf8]{inputenc}

\usepackage[a4paper,margin=1in]{geometry}
\usepackage{microtype}

\usepackage{amsmath,amssymb,amsfonts,amsthm,mathtools,bm,mathrsfs}
\usepackage{enumitem}
\usepackage{booktabs,multirow,array}

\usepackage{graphicx}
\usepackage{subcaption}
\usepackage{caption}
\usepackage{float}

\usepackage[colorlinks=true,linkcolor=blue,citecolor=blue,urlcolor=blue]{hyperref}

\numberwithin{equation}{section}
\theoremstyle{plain}
\newtheorem{theorem}{Theorem}[section]
\newtheorem{lemma}[theorem]{Lemma}

\theoremstyle{definition}
\newtheorem{definition}[theorem]{Definition}
\newtheorem{example}[theorem]{Example}
\theoremstyle{remark}

\newenvironment{keywords}{\par\vspace{0.5em}\noindent\textbf{Keywords.}\ }{\par\vspace{0.5em}}

\title{High-Order Regularity and a Fully Discrete Fourier Spectral Method for a Partially Dissipative Viscoelastic Timoshenko System with Memory}
\author{Zhenyang Zhong\thanks{School of Science, Harbin Institute of Technology, Shenzhen, Guangdong Province 518055, China. Email: \texttt{zhongzhenyang8.20@foxmail.com}.}
\and Hui Liang\thanks{School of Science, Harbin Institute of Technology, Shenzhen, Guangdong Province 518055, China. Email: \texttt{lianghui@hit.edu.cn}. Corresponding author.}}
\date{}

\begin{document}

\maketitle

\begin{abstract}
This paper investigates a class of partially dissipative viscoelastic Timoshenko systems with memory, where dissipation is induced by a Volterra-type memory term acting only on the shear variable. The well-posedness of weak and strong solutions is established on finite time intervals, including existence, uniqueness, stability, and higher-order regularity under compatibility conditions consistent with mixed boundary conditions. For the numerical approximation, a Fourier spectral fully discrete scheme is constructed: sine and cosine basis expansions are used in space for unknowns satisfying Dirichlet and Neumann boundary conditions, respectively; in time, a central difference scheme is applied to the second-order derivatives, and the composite trapezoidal rule is used to approximate the memory convolution term. Based on a discrete energy method, the positivity of the constructed discrete energy is proved, and the error estimate for the fully discrete scheme with second-order convergence in time and \(q\)-th order in space is established for any $q \in \mathbb{N}$.
Numerical experiments are given to verify the theoretical convergence rates and to compare the dynamic responses of the local and nonlocal models, demonstrating that the memory term effectively captures energy dissipation and vibration attenuation behavior in viscoelastic materials.
\end{abstract}
\begin{keywords}
Timoshenko system, memory, high-order regularity, Fourier spectral method, convergence
\end{keywords}
\section{Introduction}
Timoshenko beam models are widely used to describe beam vibrations and wave propagation when shear deformation and rotational inertia cannot be neglected \cite{timoshenko1921correction,han1999dynamics}. In viscoelastic materials, hereditary effects are commonly modeled through memory kernels, which leads naturally to Timoshenko systems with memory \cite{christensen2013theory}.

In this paper, we consider the partially dissipative viscoelastic Timoshenko system
\begin{align}\label{Timoshenko}
   \left\{\begin{array}{ll}
   \rho_1\phi_{tt}-\kappa\eta_x+\kappa\displaystyle\int_0^tg(t-s)\eta_x(s)\,\mathrm{d}s=0,
   & (x,t)\in(0,l)\times(0,T],\\[1.2ex]
   \rho_2\psi_{tt}-b\psi_{xx}+\kappa\eta-\kappa\displaystyle\int_0^tg(t-s)\eta(s)\,\mathrm{d}s=0,
   & (x,t)\in(0,l)\times(0,T],
   \end{array}\right.
\end{align}
where $\eta:=\phi_x+\psi$, supplemented with the mixed boundary conditions
\begin{align}\label{BC}
    \phi(0,t)=\phi(l,t)=0,\qquad \psi_x(0,t)=\psi_x(l,t)=0,\qquad t\in [0,T],
\end{align}
and the initial data
\begin{align}\label{IC} 
\left\{
\begin{array}{l}
\phi(x,0)=\phi_0(x),\qquad \phi_t(x,0)=\phi_1(x),\\
\psi(x,0)=\psi_0(x),\qquad \psi_t(x,0)=\psi_1(x),\qquad x\in(0,l).
\end{array}
\right.
\end{align}
Here, \(\phi  = \phi (x, t)\) and \(\psi  = \psi (x, t)\) are respectively the vertical displacement and rotation angle, 
while  the structural coefficients \(\rho_1,\rho_2,\kappa,b,T,l\)
are positive constants that depend on the structure of the beam material.
We assume that the memory kernel \(g:\mathbb{R}^+\to\mathbb{R}^+\) is nonincreasing, continuously differentiable, and satisfies
\begin{align}\label{g}
    g(0)>0,\qquad 1-\int_0^\infty g(s)\,\mathrm{d}s>0.
\end{align}
The system \eqref{Timoshenko} was introduced in \cite{alves2019modeling}, where uniform energy decay was established and the equal-wave-speed condition $\frac{\kappa}{\rho_1}=\frac{b}{\rho_2}$ was shown to be necessary and sufficient for uniform decay. More generally, Timoshenko systems with damping or memory have been extensively studied from the viewpoints of stability and asymptotic behavior (see \cite{ammar2003energy,alves2019modeling,tavaresSilvaMaOquendo2024shearing}). Related viscoelastic and fractional Timoshenko models have also been investigated with respect to well-posedness, regularity, and numerical approximation (see \cite{zheng2022viscoelastic,li2023viscoelastic}). 
Nevertheless, the present system still lacks a complete well-posedness and regularity framework for rigorous numerical analysis.
In particular, the existence and uniqueness results in \cite[Theorem 3.2]{alves2019modeling} are only outlined via a Faedo--Galerkin method, without detailed formulations of weak and strong solutions, compatibility conditions, or higher-order regularity. 
Moreover, although substantial progress has been made on the numerical analysis of evolution equations with memory, fractional wave equations, and viscoelastic wave models
\cite{mustaphaMclean2009memoryDG,xu2015decayMemory,guoJiangXiongZhang2024constantQ}, rigorous convergence analyses for high-accuracy fully discrete numerical methods tailored to the partially dissipative Timoshenko system \eqref{Timoshenko} are still limited.

A distinctive feature of \eqref{Timoshenko} is that dissipation is induced solely by the Volterra-type memory term acting on the shear variable $\eta=\phi_x+\psi$. Consequently the system is partially dissipative, with damping transmitted indirectly through the coupling structure. In addition, the memory term is nonlocal in time, and the mixed boundary conditions require different spatial treatments for the two unknowns. These features make both the continuous energy analysis and the design of accurate fully discrete methods more delicate than for standard local Timoshenko systems (see \cite[Eq.~(1.1)]{messaoudi2008internal}).

The aim of this paper is twofold. First, a complete well-posedness and regularity theory is established for \eqref{Timoshenko}--\eqref{IC} on finite time intervals. The precise definitions of weak and strong solutions are given. The existence, uniqueness, and stability of weak solutions are established first, and the corresponding well-posedness result for strong solutions is then derived under additional regularity and compatibility assumptions; the higher-order regularity estimate is further derived under suitable regularity assumptions on the initial data and compatibility conditions consistent with \eqref{BC}. Second, a Fourier spectral fully discrete method adapted to the structure of the problem is developed. In space, a sine expansion is used for $\phi$ 
and a cosine expansion for $\psi$ to incorporate naturally the mixed boundary conditions. 
In time, the centered difference for the second-order derivatives is combined with the composite trapezoidal rule for the memory integral. By constructing a suitable discrete energy for the error system, its positivity is proved under a suitable mesh condition and the error estimate is derived with second-order accuracy in time and \(q\)-th order accuracy in space for sufficiently smooth solutions with any $q \in \mathbb{N}$. Numerical experiments confirm the theoretical convergence rates and illustrate the dissipative effect of the memory.

The remainder of the paper is organized as follows. In Section~\ref{s2}, the weak and strong solutions of \eqref{Timoshenko}--\eqref{IC} are defined, the well-posedness of weak solutions is established, and the second-order energy estimate needed for the well-posedness of strong solutions is derived. In Section~\ref{s3}, this estimate is  generalized to arbitrary orders and used to prove the higher-order regularity. In Section~\ref{s4}, a fully discrete Fourier spectral scheme is constructed, a discrete energy is derived from the error equation, and the convergence of the scheme is proved. In Section~\ref{s5}, numerical experiments verify the convergence theory and illustrate the dissipative and memory effects of the nonlocal model. 
Directions for future work are presented in Section~\ref{s6}. 

Throughout the paper, $C$ denotes a generic positive constant, independent of the variables under consideration, which may take different values in different places.  The notation $A\lesssim B$ means that $A\leq CB$ for such a constant $C$.
In $L^2:=L^2(0,l)$, we write $(\cdot,\cdot)$ and $\|\cdot\|_2$ for the inner product and norm, respectively.

\section{Well-posedness of weak and strong solutions}\label{s2}
In this section, we introduce the function spaces used throughout the paper and establish the well-posedness of weak and strong solutions to \eqref{Timoshenko}--\eqref{IC} on finite time intervals.

First, we introduce several standard function spaces. Let $q$ be a positive integer, $H^q:=H^q(0,l)$, $\|u\|_{H^q}^2:=\sum_{k=0}^q \| u^{(k)} \|_2^2$, and
\begin{align*}
&L_*^2:=L_*^2(0,l):=\Big\{u\in L^2(0,l);\ \frac1l\int_0^l u(x)\,\mathrm{d}x=0\Big\},\\
&H_0^1:=H_0^1(0,l):=\left\{u\in H^1(0,l);\ u(0)=u(l)=0\right\},\\
&H_*^1:=H_*^1(0,l):=\Big\{u\in H^1(0,l);\ 
\frac1l\int_0^l u(x)\,\mathrm{d}x=0\Big\},\\
&C_c^\infty(0,T):=\left\{\varphi:(0,T)\to\mathbb{R};\ \varphi\in C^\infty(0,T),\ \operatorname{supp}\varphi \subset (0,T)\ \text{is compact} \right\}.
\end{align*}
By the Poincar\'e inequality, 
the following equivalent norms may be adopted on 
\(H_0^1\) and \(H_*^1\), respectively:
\[
\|u\|_{H_0^1}:=\|u_x\|_2,
\qquad
\|u\|_{H_*^1}:=\|u_x\|_2.
\]

Let $I_T:=[0,T]$ and let $(X,\|\cdot\|_X)$ be a Banach space. 
We denote by $C^q(I_T;X)$ the space of all $X$-valued functions $\mathbf{v}: I_T\to X$ 
that are $q$ times continuously differentiable, 
equipped with the norm (see \cite{RaviartThomas1983}, p.~156)
\[
\|\mathbf{v}\|_{C^q(I_T;X)} :=\max_{0\leq k\leq q}\left\{\sup_{t\in I_T}
\left\|\frac{\mathrm{d}^k\mathbf{v}}{\mathrm{d}t^k}(t)\right\|_X\right\}
<\infty.
\]

We now introduce the definition of weak and strong solutions. 
\begin{definition}\label{weak_sol}
We say that a pair of functions \((\phi,\psi)\) is a weak solution of the Timoshenko system \eqref{Timoshenko}--\eqref{IC} if it satisfies the regularity conditions
\begin{align*}
&\phi \in C(I_T;H_0^1) \cap C^1(I_T;L^2), \quad \psi \in C(I_T;H_*^1) \cap C^1(I_T;L_*^2),
\end{align*}
and, for every \( w \in H_0^1, v \in H_*^1\), the following identities hold in the sense of distributions:
\begin{align*}
&\rho_1 \frac{\mathrm{d}^2}{\mathrm{d}t^2}(\phi, w) + \kappa(\eta, w_x) - \kappa\left( \int_0^t g(t-s)\eta(s)\mathrm{d}s, w_x \right) = 0, \\
&\rho_2 \frac{\mathrm{d}^2}{\mathrm{d}t^2}(\psi, v) + b(\psi_x, v_x) + \kappa(\eta, v) - \kappa\left( \int_0^t g(t-s)\eta(s)\mathrm{d}s, v \right) = 0.
\end{align*}
That is, for every $\lambda \in C_c^{\infty}(0,T)$, one has
\begin{align*}
    &\int_0^T \rho_1(\phi,w)\lambda^{\prime\prime}(t)\mathrm{d}t=\int_0^T\left[ -\kappa(\eta,w_{x})+\kappa\left(\int_0^tg(t-s)\eta(s)\mathrm{d}s,w_{x}\right) \right] \lambda(t)\mathrm{d}t,\\
    &\int_0^T\rho_2(\psi,v)\lambda^{\prime\prime}(t)\mathrm{d}t=\int_0^T\left[ -b(\psi_{x},v_{x})-\kappa(\eta,v)+\kappa\left(\int_0^tg(t-s)\eta(s)\mathrm{d}s,v\right)\right]\lambda(t)\mathrm{d}t.
\end{align*}
Moreover, \(\phi\) satisfies the Dirichlet boundary conditions in \eqref{BC}, namely,
$\phi(0,t)=\phi(l,t)=0,\ t\in I_T$.
The Neumann boundary conditions for \(\psi\) in \eqref{BC} are understood in the weak sense through the term \((\psi_x,v_x)\) appearing above. The initial conditions are attained in the sense of \(L^2\) and \(L_*^2\), namely,
\begin{align}\label{weak_IC}
\left\{
\begin{aligned}
\phi(x,0)&=\phi_0(x),\quad \phi_t(x,0)=\phi_1(x),\\
\psi(x,0)&=\psi_0(x),\quad \psi_t(x,0)=\psi_1(x),
\end{aligned}
\right.
\quad \text{a.e. } x \in (0,l),
\end{align}
where $(\phi_0,\phi_1,\psi_0,\psi_1)\in H_0^1\times L^2 \times H_*^1 \times L_*^2$.
\end{definition}

\begin{definition}\label{str_sol}
We say that a pair of functions \((\phi,\psi)\) is a strong solution of the Timoshenko system \eqref{Timoshenko}--\eqref{IC} if it satisfies the regularity conditions
\begin{align*}
&\phi \in C(I_T; H^2 \cap H_0^1) \cap C^1(I_T; H_0^1) \cap C^2(I_T; L^2), \\
&\psi \in C(I_T; H^2 \cap H_*^1) \cap C^1(I_T; H_*^1) \cap C^2(I_T; L_*^2),
\end{align*}
and satisfies equation \eqref{Timoshenko} for all \(t \in I_T\) and for almost every \(x \in (0,l)\). In addition, the boundary conditions \eqref{BC} and the initial conditions \eqref{IC} are satisfied, where
$(\phi_0,\phi_1,\psi_0,\psi_1)\in(H^2\cap H_0^1)\times H_0^1\times(H^2\cap H_*^1)\times H_*^1$,
and \(\psi_0\) satisfies the compatibility conditions
$\psi_0^{(1)}(0)=\psi_0^{(1)}(l)=0$.
\end{definition}

We first state the well-posedness of the weak solution.
\begin{theorem}\label{thm:weak_sol}
Assume that \((\phi_0,\phi_1,\psi_0,\psi_1)\in H_0^1\times L^2 \times H_*^1 \times L_*^2\). Then the Timoshenko system \eqref{Timoshenko}--\eqref{IC} admits a unique weak solution \((\phi,\psi)\) and the following stability estimate holds:
\begin{equation*}
\begin{split}
&\| \phi \|_{C(I_T;H_0^1)} + \| \phi \|_{C^1(I_T;L^2)}
+\| \psi \|_{C(I_T;H_*^1)} + \| \psi \|_{C^1(I_T;L_*^2)}
\\
&\lesssim\;
\| \phi_0 \|_{H_0^1} + \| \phi_1 \|_{2}
+\| \psi_0 \|_{H_*^1} + \| \psi_1 \|_{2}.
\end{split}
\end{equation*}
\end{theorem}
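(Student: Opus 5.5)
We will use a Faedo--Galerkin scheme adapted to the mixed boundary conditions, combined with an energy estimate that is uniform in the Galerkin dimension. For the approximate basis we take the Dirichlet eigenfunctions $w_j=\sin(j\pi x/l)$ for $\phi$ and the Neumann eigenfunctions $v_j=\cos(j\pi x/l)$, $j\ge1$, for $\psi$. These are complete orthogonal systems in $H_0^1$, $L^2$ and in $H_*^1$, $L_*^2$, respectively. We seek
\[
\phi^m=\sum_{j\le m}a_j(t)w_j,\qquad \psi^m=\sum_{j\le m}c_j(t)v_j,
\]
solving the weak identities of Definition~\ref{weak_sol} tested against $w_j,v_j$, $j\le m$. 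The initial data are the orthogonal projections of $(\phi_0,\phi_1,\psi_0,\psi_1)$ onto the finite-dimensional spaces. The result is a linear system of second-order Volterra integro-differential ODEs with continuous kernel $g$. Rewritten as a first-order system, it has a unique $C^2$ solution on $I_T$ by standard linear Volterra theory (Picard iteration or Gronwall).

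The key step is a uniform energy bound. Test the first Galerkin equation with $\phi^m_t$ and the second with $\psi^m_t$, and add. This gives
\[
\frac12\frac{\mathrm d}{\mathrm dt}\Big(\rho_1\|\phi^m_t\|_2^2+\rho_2\|\psi^m_t\|_2^2+b\|\psi^m_x\|_2^2+\kappa\|\eta^m\|_2^2\Big)=\kappa\Big(\int_0^t g(t-s)\eta^m(s)\,\mathrm ds,\ \eta^m_t\Big).
\]
Here $\eta^m=\phi^m_x+\psi^m$. We will handle the memory term in one of two ways. The first is the standard identity
\[
(g*\eta,\eta_t)=-\tfrac12\tfrac{\mathrm d}{\mathrm dt}\big[(g\square\eta)-(\textstyle\int_0^t g)\|\eta\|^2\big]+\tfrac12(g'\square\eta)-\tfrac12 g(t)\|\eta\|^2,
\]
with $(g\square\eta)(t)=\int_0^t g(t-s)\|\eta(t)-\eta(s)\|_2^2\,\mathrm ds$. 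Because $g'\le0$, this yields a modified energy $E^m(t)$ that is nonincreasing and contains $\kappa(1-\int_0^\infty g)\|\eta^m\|^2$, which is positive by \eqref{g}. The second, simpler option on a finite interval is to integrate by parts in time. That gives
\[
\kappa\Big(\int_0^t g(t-s)\eta^m(s)\,\mathrm ds,\eta^m(t)\Big)-\kappa\int_0^t\Big[g(0)\|\eta^m\|^2+\big(\textstyle\int_0^\tau g'(\tau-s)\eta^m(s)\,\mathrm ds,\eta^m(\tau)\big)\Big]\mathrm d\tau,
\]
which is bounded by $\varepsilon\|\eta^m(t)\|^2+C_\varepsilon\int_0^t\|\eta^m\|^2$, after which Gronwall applies. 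Either way we get
\[
\rho_1\|\phi^m_t\|^2+\rho_2\|\psi^m_t\|^2+b\|\psi^m_x\|^2+\kappa\|\eta^m\|^2\lesssim \text{initial energy}.
\]
We then need coercivity: $\|\phi^m_x\|\le\|\eta^m\|+\|\psi^m\|\le\|\eta^m\|+C\|\psi^m_x\|$, using Poincaré on $H_*^1$. This controls $\|\phi^m_x\|$. The initial energy is bounded by the right-hand side of the theorem because projections are contractive and $\|\eta_0\|\lesssim\|\phi_0\|_{H_0^1}+\|\psi_0\|_{H_*^1}$. We expect this energy/memory step to be the main technical point. It is the only place where the hypotheses on $g$ (nonincreasing, $C^1$, condition \eqref{g}) enter.

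Passage to the limit follows from the uniform bounds. The sequences are bounded in $L^\infty(I_T;H_0^1)\cap W^{1,\infty}(I_T;L^2)$ and $L^\infty(I_T;H_*^1)\cap W^{1,\infty}(I_T;L_*^2)$, so weak-$*$ compactness gives limits. Linearity lets the identities pass to the limit, including the convolution term, which is weakly continuous. The initial conditions are recovered as usual. For time continuity with values in the energy space, rather than merely boundedness, we will argue as follows. The Galerkin solutions are Cauchy in $C(I_T;H_0^1)\cap C^1(I_T;L^2)$: the difference $\phi^m-\phi^n$ solves the same linear system with projected-data differences, so the energy estimate applied to it, together with convergence of the projections of the data, gives the Cauchy property. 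This yields the required regularity directly, and the stability estimate passes to the limit.

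Uniqueness follows because the difference of two weak solutions has zero data. Its energy estimate must be justified at weak regularity. We will do this by a Lions--Magenes time-regularization, or more simply by expanding the difference in the same eigenbases. Testing with $w_j,v_j$ shows that the Fourier coefficients satisfy the projected Galerkin system. The energy bound for the truncations is uniform in $m$, which forces all coefficients to vanish.
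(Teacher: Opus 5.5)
Your proposal is correct and follows essentially the paper's route. Both use a sine/cosine Faedo--Galerkin scheme solved by linear Volterra theory, and the modified energy containing $G(t)\|\eta^m\|_2^2+(g\circ\eta^m)(t)$, which is nonincreasing because $g'\le 0$. Both obtain the Cauchy property of $\{\phi^m\},\{\psi^m\}$ in $C(I_T;H_0^1)\cap C^1(I_T;L^2)$ from the energy of the differences. Both prove uniqueness by reducing a zero-data weak solution to the modal Volterra system for its Fourier coefficients.

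One point is worth stating explicitly. The claim that $\phi^m-\phi^n$ solves the $m$-dimensional Galerkin system with projected-data differences holds only because this eigenbasis decouples the modes: $w_j'=\frac{j\pi}{l}v_j$ and $v_j'=-\frac{j\pi}{l}w_j$. As a result, the difference is just the tail $\sum_{j=n+1}^m a_jw_j$. For a general Galerkin basis this claim would fail.

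Your Gronwall alternative is a valid extra. It shows that on the finite interval $I_T$ only $g\in C^1$ is needed, not the sign and monotonicity conditions in \eqref{g}.
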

\begin{proof}
The proof is carried out in four steps. 

\medskip
\noindent\textbf{Step 1. Construction of the Galerkin system}
\medskip

Let $w_j(x):=\sqrt{\frac{2}{l}}\mathrm{sin}(\frac{j\pi x}{l})$ and $v_j(x):=\sqrt{\frac{2}{l}}\mathrm{cos}(\frac{j\pi x}{l})$ for $x \in [0,l]$ and $j \in \mathbb{N}$. It can be checked that $\{w_j\}_{j=1}^{\infty}$ and $\{v_j\}_{j=1}^{\infty}$ are respectively the orthonormal basis of $H_0^1$ and $H_*^1$ (see \cite[Section 7.3.1]{Canuto2006}). We define the approximate solutions by
    \begin{equation}\label{as}
         \phi^m(x,t):=\sum_{j=1}^ma_{j}(t)w_j(x),\quad \psi^m(x,t):=\sum_{j=1}^mb_{j}(t)v_j(x),   
    \end{equation}
where $a_{j}$ and $b_{j}$, $j\in \mathbb{N}_m:=\left\{1,\dots,m\right\}$, are required to satisfy, for all $t\in (0,T]$,
    \begin{align}\label{gk1}
        \left.\left\{\begin{array}{l}\rho_1(\phi_{tt}^m,w_i)+\kappa(\eta^m,w_{i,x})-\kappa\left(\int_0^tg(t-s)\eta^m(s)\mathrm{d}s,w_{i,x}\right)=0,\\
      \rho_2(\psi_{tt}^m,v_i)+b(\psi_{x}^m,v_{i,x})+\kappa(\eta^m,v_{i})-\kappa\left(\int_0^tg(t-s)\eta^m(s)\mathrm{d}s,v_i\right)=0,\end{array}\right.\right. 
    \end{align}
together with the initial conditions
    \begin{align}\label{gkIC}
        \left.\left\{\begin{array}{l}a_{i}(0)=(\phi _0,w_i),\quad a^{\prime}_{i}(0)=(\phi _1,w_i),\\
        b_{i}(0)=(\psi _0,v_i),\quad b^{\prime}_{i}(0)=(\psi _1,v_i),  \end{array}\right.\right. 
    \end{align}
where $\eta^m:=\phi_x^m + \psi^m$, $w_{i,x}:=\frac{\mathrm{d}w_i}{\mathrm{d}x}$, $v_{i,x}:=\frac{\mathrm{d}v_i}{\mathrm{d}x}$, and $i\in \mathbb{N}_m$. Substituting the approximate solutions \eqref{as} into the Galerkin system \eqref{gk1}, we obtain
    \begin{equation}\label{ab_ode}
        \left.\left\{\begin{array}{l}
           a_{i}^{\prime \prime}(t)=-\kappa_1 c_i d_{i}(t)
          +\kappa_1  c_i \int_0^tg(t-s)d_{i}(s)\mathrm{d}s,
          \\
           b_{i}^{\prime \prime}(t)=-\kappa_2 d_{i}(t) -\bar{b} c_i^ 2 b_{i}(t)
          +\kappa_2 \int_0^tg(t-s)d_{i}(s)\mathrm{d}s,\end{array}\right.\right.    
    \end{equation}
where $\kappa_1:=\frac{\kappa}{\rho_1}$, $\kappa_2:=\frac{\kappa}{\rho_2}$, $\bar{b}:=\frac{b}{\rho_2}$, $c_i:=\frac{i \pi}{l}$ and $d_i:=c_i a_i +b_i$. For $i\in \mathbb{N}_m$, let $\boldsymbol{X}_i:=\left(a_{i}, b_{i}, a^{\prime}_{i}, b^{\prime}_{i}\right)^\top$.

Then the system of second-order Volterra integro-differential equations (VIDEs) \eqref{ab_ode} can be rewritten as the following first-order VIDE system:
    \begin{align}\label{ode}
        \frac{\mathrm{d}\boldsymbol{X}_i(t)}{\mathrm{d}t}=\boldsymbol{D}_i\boldsymbol{X}_i(t)+\int_0^tg(t-s)\boldsymbol{G}_i\boldsymbol{X}_i(s)\mathrm{d}s,\ t\in (0,T],\ \ i\in \mathbb{N}_m.
    \end{align}
    with
\[    
        \boldsymbol{D}_i:=\begin{pmatrix}0&0&1&0\\
        0&0&0&1\\
        -\kappa_1 c^2_{i}&-\kappa_1 c_{i}&0&0\\
        -\kappa_2 c_{i}&-\bar{b}c_i^2-\kappa_2&0&0\end{pmatrix},\quad
        \boldsymbol{G}_i:=\begin{pmatrix}0&0&0&0\\
        0&0&0&0\\
        \kappa_1 c^2_{i} &\kappa_1 c_{i}&0&0\\
        \kappa_2 c_{i}&\kappa_2&0&0\end{pmatrix},
\]
and the initial conditions \eqref{gkIC} for the Galerkin system yield the initial data for \eqref{ode}:
    \begin{align}\label{odeIC}
        \boldsymbol{X}_i(0)=\left((\phi _0,w_i),\ (\psi _0,v_i),\ (\phi _1,w_i),\ (\psi _1,v_i) \right)^{\top},\ i\in \mathbb{N}_m.
    \end{align}
Furthermore, \eqref{ode} can be transformed into the Volterra integral equation (VIE)
\begin{align}\label{e:V2}
        \boldsymbol{X}_i(t) =\boldsymbol{X}_i(0)
        +\int_0^t\boldsymbol{K}_i(t,s)\boldsymbol{X}_i(s)\mathrm{d}s,  
    \end{align}
where the kernel $\boldsymbol{K}_i(t,s):=\boldsymbol{D}_i+\int_s^t g(\tau-s)\mathrm{d}\tau \boldsymbol{G}_i$, and $\boldsymbol{K}_i \in C^2(D_T;\mathbb{R}^{4\times4})$ since $g \in C^1(I_T;\mathbb{R})$, with $D_T:=\{ (t,s)| 0\leq s \leq t \leq T \}$. From the theory of VIEs (see \cite[Theorem 2.1.7]{Bru04}), there exists a unique $\boldsymbol{X}_i\in C^2(I_T;\mathbb{R}^{4})$ satisfying \eqref{ode}--\eqref{odeIC}.

\medskip
\noindent \textbf{Step 2. A priori estimates}
\medskip

Let $m$ and $n$ be two positive integers with $m>n$. Set $G(t) := 1-\int_0^t g(s)\,\mathrm{d}s$. Define the difference functions 
$\Phi^{m,n}:=\phi^m-\phi^n,\ \Psi^{m,n}:=\psi^m-\psi^n$,
and introduce the corresponding energy
\[  
            E_1^{m,n}(t)
            := \frac{1}{2}\Bigl[
            \rho_1\|\Phi_t^{m,n}\|_2^2
            +\rho_2\|\Psi_t^{m,n}\|_2^2
            +b\|\Psi_x^{m,n}\|_2^2 
            \Bigr] 
            +\frac{\kappa}{2}\Bigl[
            G(t)\|H^{m,n}\|_2^2
            +(g\circ H^{m,n})(t)
            \Bigr]
 \]
with $H^{m,n}:=\Phi_x^{m,n}+\Psi^{m,n}$ and
    $$
        (g\circ H^{m,n})(t):=\int_0^l\int_0^tg(t-s)\left|H^{m,n}(t)-H^{m,n}(s)\right|^2\mathrm{d}s\mathrm{d}x.
    $$
Replacing $m$ by $n$ in \eqref{gk1} and subtracting the resulting system from \eqref{gk1}
yields
    \begin{align}\label{gk_Phsi}
        \left.\left\{\begin{array}{l}\rho_1(\Phi_{tt}^{m,n},w_i)+\kappa(H ^{m,n},w_{i,x})=\kappa\left(\int_0^tg(t-s)H ^{m,n}(s)\mathrm{d}s,w_{i,x}\right),\\
        \rho_2(\Psi_{tt}^{m,n},v_{i})+b(\Psi_{x}^{m,n},v_{i,x})+\kappa(H ^{m,n},v_{i})=\kappa\left(\int_0^tg(t-s)H ^{m,n}(s)\mathrm{d}s,v_{i}\right).\end{array}\right.\right. 
    \end{align}
Multiply the first equation in \eqref{gk_Phsi} by 
 $a^{\prime}_i$, the second by $b^{\prime}_i$, sum over
 $i = n+1, \dots,m$, then add the results;
the orthonormality of the $\{w_j\}_{j=1}^{\infty}$ and $\{v_j\}_{j=1}^{\infty}$  
gives
    \begin{align*}
        \frac{\mathrm{d}}{\mathrm{d}t}E_1^{m,n}(t)\leq -\frac{\kappa}{2}g(t)\|H^{m,n}\|_2^2+\frac{\kappa}{2}\int_0^l\int_0^tg'(t-s)|H^{m,n}(t)-H^{m,n}(s)|^2\mathrm{d}s\mathrm{d}x \leq 0,
    \end{align*}
here, we used the assumptions on \(g\) in \eqref{g} for the last inequality,
which implies that
    \begin{align}\label{e:Et0}
        E_1^{m,n}(t)\leq E_1^{m,n}(0).
    \end{align}
In addition, by the regularity of the initial data, we have
{\small\[
  \lim\limits_{n \to \infty}E_1^{m,n}(0) 
  = \lim\limits_{n \to \infty} \frac{1}{2} \Bigl[
   \rho_1\|\Phi_t^{m,n}(0)\|_2^2
   +\rho_2\|\Psi_t^{m,n}(0)\|_2^2
   +b\|\Psi_x^{m,n}(0)\|_2^2
   +\kappa\|H^{m,n}(0)\|_2^2
   \Bigr]=0;
\]}
thus, we obtain the following a priori estimate:
    \begin{align}\label{eg_esti}
    \lim\limits_{n \to \infty}E_1^{m,n}(t)\leq \lim\limits_{n \to \infty}E_1^{m,n}(0) 
    =0,\ \forall t\in I_T.
    \end{align}

\medskip
\noindent\textbf{Step 3. Construction of a weak solution and proof of uniqueness}
\medskip

By \eqref{eg_esti}, we obtain the estimates
\begin{align*}
&\| \Phi^{m,n} \|_{C(I_T;H^1_0)} =\sup_{t \in I_T}\|\Phi^{m,n}_x(t)\|_2 
\lesssim \sup_{t \in I_T}\|H^{m,n}(t)\|_2+ \sup_{t \in I_T}\|\Psi^{m,n}_x(t)\|_2\xrightarrow{n\to \infty}0,\\
&\| \Phi^{m,n} \|_{C^1(I_T;L^2)}
\lesssim  \sup_{t \in I_T}\|\Phi_x^{m,n}(t)\|_2+\sup_{t \in I_T}\|\Phi_t^{m,n}(t)\|_2\xrightarrow{n\to \infty}0,
\end{align*}
so $\{\phi^m\}$ is a Cauchy sequence both in $C(I_T;H^1_0)$ and $C^1(I_T;L^2)$, both of which are contained in $C(I_T;L^2)$, so there exists a unique  function $\phi\in C(I_T;H^1_0)\cap C^1(I_T;L^2)$ such that
$\phi^m \to \phi $.
Similarly, there exists a unique $\psi \in C(I_T;H^1_*)\cap C^1(I_T;L^2_*)$ such that
$\psi^m \to \psi$. In addition, by the Sobolev embedding
$H^1 \hookrightarrow C([0,l])$, we know that $(\phi,\psi)$ satisfies the initial and boundary conditions \eqref{weak_IC} and \eqref{BC}.

Let $W:=\{ w_i \}_{i=1}^\infty$ and $V:=\{ v_i \}_{i=1}^\infty$. For any $\lambda \in C_c^{\infty}(0,T)$, multiplying both sides of the Galerkin system \eqref{gk1} by $\lambda (t)$, integrating over $t\in I_T$, and then passing to the limit as $m\to \infty$, for any $w\in W$ and $v\in V$ we obtain 
\begin{align*}
	\int_0^T \rho_1(\phi,w)\lambda''(t)\,\mathrm{d}t
	&=
	\int_0^T \left[
	-\kappa(\eta,w_x)
	+\kappa\left(\int_0^t g(t-s)\eta(s)\,\mathrm{d}s,w_x\right)
	\right]\lambda(t)\,\mathrm{d}t, \\
	\int_0^T \rho_2(\psi,v)\lambda''(t)\,\mathrm{d}t
	&=
	\int_0^T \left[
	-b(\psi_x,v_x)
	-\kappa(\eta,v)
	+\kappa\left(\int_0^t g(t-                    s)\eta(s)\,\mathrm{d}s,v\right)
	\right]\lambda(t)\,\mathrm{d}t.
\end{align*}
Since $W$ and $V$ are dense in $H^1_0$ and $H^1_*$, respectively, the above identities hold for every $w\in H^1_0$ and $v \in H^1_*$. By Definition \ref{weak_sol}, $(\phi,\psi)$ is a weak solution of the system.

To prove the uniqueness, suppose that 
$\bar{\phi} \in C(0,T;H_0^1)$ $\cap$ $C^1(0,T;L^2)$ and
$\bar{\psi} \in C(0,T;H_*^1) \cap C^1(0,T;L_*^2)$,
and that for every $w \in H_0^1$ and $v \in H_*^1$, the following identities hold in the sense of distributions:
\begin{align}
	&\rho_1 \frac{\mathrm{d}^2}{\mathrm{d}t^2}(\bar{\phi}, w) + \kappa(\bar{\eta}, w_x) - \kappa\left( \int_0^t g(t-s)\bar{\eta}(s)\mathrm{d}s, w_x \right) = 0,\label{w_sol1}\\
	&\rho_2 \frac{\mathrm{d}^2}{\mathrm{d}t^2}(\bar{\psi}, v) + b(\bar{\psi}_x, v_x) + \kappa(\bar{\eta}, v) - \kappa\left( \int_0^t g(t-s)\bar{\eta}(s)\mathrm{d}s, v \right) = 0,\label{w_sol2}
\end{align}
where $\bar{\eta} := \bar{\phi}_x+\bar{\psi}$. Moreover, assume that $(\bar{\phi},\bar{\psi})$ satisfies the homogeneous initial conditions
$\bar{\phi}(x,0) = \bar{\phi}_t(x,0) = \bar{\psi}(x,0) = \bar{\psi}_t(x,0)=0,\ \text{a.e. } x \in (0,l)$,
and that $\bar{\phi}$ satisfies the homogeneous Dirichlet boundary conditions
$\bar{\phi}(0,t)=\bar{\phi}(l,t)=0,\ t\in I_T$.

Write
$\bar{\phi}(x,t)=\sum_{j=1}^{\infty}\bar{a}_j(t) w_j(x)$ and
$\bar{\psi}(x,t)=\sum_{j=1}^{\infty}\bar{b}_j(t) v_j(x)$,
for $\text{a.e.} \, x \in (0,l)$,
where $\bar{a}_i=(\bar{\phi},w_i),\bar{b}_i=(\bar{\psi},v_i) \in C^1(I_T)$. Substituting these expansions into \eqref{w_sol1} and \eqref{w_sol2}, and taking $w=w_i$ and $v=v_i$, similarly to Step 1, we obtain the VIE system \eqref{e:V2} with $\boldsymbol{X}_i:=\left(\bar{a}_{i},\bar{b}_{i},\bar{a}^{\prime}_{i}, \bar{b}^{\prime}_{i}\right)^\top$ and zero initial values $\boldsymbol{X}_i(0)=0$. Again by the theory of VIEs (see \cite[Theorem 2.1.7]{Bru04}), the unique solution $\boldsymbol{X}_i(t)\equiv0$. Hence $(\bar{\phi},\bar{\psi})=(0,0)$, which completes the proof of the uniqueness of weak solutions.

\medskip
\noindent\textbf{Step 4. Stability estimate}
\medskip

Define the energy functional as
\[
		E_1^m(t):= \frac{1}{2}\Bigl[
		\rho_1\|\phi_t^m\|_2^2
		+\rho_2\|\psi_t^m\|_2^2
		+b\|\psi_x^m\|_2^2
		\Bigr]
		+\frac{\kappa}{2}\Bigl[
		G(t)\|\eta^m\|_2^2
		+(g\circ\eta^m)(t)
		\Bigr],\ t\in I_T.
\]
Similarly to the derivative of the difference energy, we obtain
\begin{align*}
	\frac{\mathrm{d}}{\mathrm{d}t}E_1^m(t)=
	-\frac{\kappa}{2}g(t)\|\eta^m\|_2^2+\frac{\kappa}{2}\int_0^l\int_0^tg^{\prime}(t-s)|\eta^m(t)-\eta^m(s)|^2\mathrm{d}s\mathrm{d}x\leq 0.
\end{align*}
Consequently,
\begin{align}\label{eg_estimate}
	E_1^m(t)\leq E_1^m(0) \lesssim \left[\| \phi_0 \|_{H_0^1}+ \| \phi_1 \|_{2}+\| \psi_0 \|_{H_*^1}+ \| \psi_1 \|_{2}\right]^2,
\end{align}
which implies that
$$
\| \phi^m \|^2_{H_0^1}+  \| \phi^m_t \|^2_{2}+\| \psi^m \|^2_{H_*^1}+ \| \psi^m_t \|^2_{2} \lesssim  \left[\| \phi_0 \|_{H_0^1}+ \| \phi_1 \|_{2}+\| \psi_0 \|_{H_*^1}+ \| \psi_1 \|_{2}\right]^2,
$$
and the desired stability estimate follows by 
taking the limit $m \to \infty$.

This completes the proof of Theorem~\ref{thm:weak_sol}.
\end{proof}

The following second-order energy estimate for the Galerkin approximation is the key higher-regularity estimate in proving well-posedness of strong solutions and will be extended to arbitrary orders in Section~\ref{s3}.

\begin{lemma}\label{lem:second_order_energy}
Assume that $(\phi_0,\phi_1,\psi_0,\psi_1)\in (H^2\cap H_0^1)\times H_0^1\times (H^2\cap H_*^1)\times H_*^1$ with the compatibility condition $\psi^{(1)}_0(0)=\psi^{(1)}_0(l)=0$. Let \(m,n\in\mathbb{N}\) with \(m\ge n\). Define the second-order difference energy and the second-order energy by
\begin{align*}
        E_2^{m,n}(t):= &\frac{1}{2} \left[ \rho_1 \| \partial_t^{2} \Phi^{m,n} \|_2^2 + \rho_2 \| \partial_t^{2} \Psi^{m,n} \|_2^2+ b \| \partial_t \Psi_{x}^{m,n} \|_2^2+ \kappa \| \partial_t H^{m,n} \|_2^2 \right],\\
        E_2^{m}(t):= &\frac12\left[
        \rho_1\|\partial_t^2\phi^{m}(t)\|_2^2
        +\rho_2\|\partial_t^2\psi^{m}(t)\|_2^2
        +b\|\partial_t\psi_x^{m}(t)\|_2^2
        +\kappa\|\partial_t\eta^{m}(t)\|_2^2
        \right].
\end{align*}
 Then
\begin{align}
        &E_2^{m,n}(t)\lesssim \sum_{p=1}^2 E_p^{m,n}(0),
        \qquad
        \lim_{n\to\infty}E_2^{m,n}(0)=0,\label{e:E2mn}\\
       &E_2^m(t)\lesssim \sum_{p=1}^2 E_p^{m}(0)
        \lesssim
        \left[
        \|\phi_0\|_{H^2}+\|\psi_0\|_{H^2}
        +\|\phi_1\|_{H_0^1}+\|\psi_1\|_{H_*^1}
        \right]^2. \label{esti_e2m}
    \end{align}
Moreover, all mixed derivatives with total order $2$ for \((\Phi^{m,n},\Psi^{m,n})\) and \((\phi^{m},\psi^{m})\) satisfy
\begin{align}
        \sum_{k=0}^2\|\partial_t^k\partial_x^{2-k}\Phi^{m,n}(t)\|_2^2
        +\sum_{k=0}^2\|\partial_t^k\partial_x^{2-k}\Psi^{m,n}(t)\|_2^2
        \lesssim \sum_{p=1}^2 E_p^{m,n}(0),\label{eng_diff_equ2}\\
        \sum_{k=0}^2\|\partial_t^k\partial_x^{2-k}\phi^m(t)\|_2^2
        +\sum_{k=0}^2\|\partial_t^k\partial_x^{2-k}\psi^m(t)\|_2^2
        \lesssim \sum_{p=1}^2 E_p^{m}(0).\label{eng_equ2}
\end{align}
\end{lemma}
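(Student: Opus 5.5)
The plan is to differentiate the Galerkin system \eqref{gk1} (and the difference system \eqref{gk_Phsi}) once in time and rerun the energy argument of Step~2 in the proof of Theorem~\ref{thm:weak_sol}, this time testing with $a_i''$ and $b_i''$. By Step~1, $\boldsymbol{X}_i\in C^2$, so $a_i,b_i\in C^3$ and the differentiation is legitimate. Differentiating the memory term gives
\[
\frac{\mathrm{d}}{\mathrm{d}t}\int_0^t g(t-s)H^{m,n}(s)\,\mathrm{d}s = g(t)H^{m,n}(0)+\int_0^t g(t-s)\partial_tH^{m,n}(s)\,\mathrm{d}s .
\]
So the differentiated system has the same structure as \eqref{gk_Phsi} with $(\Phi,\Psi)$ replaced by $(\partial_t\Phi,\partial_t\Psi)$, plus the source term $\kappa g(t)H^{m,n}(0)$ in both equations. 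Multiplying by $a_i''$, $b_i''$, summing over $i=n+1,\dots,m$ and adding, I expect
\[
\frac{\mathrm{d}}{\mathrm{d}t}E_2^{m,n}(t)=\kappa\Big(\int_0^t g(t-s)\partial_tH^{m,n}(s)\,\mathrm{d}s+g(t)H^{m,n}(0),\ \partial_t^2H^{m,n}(t)\Big).
\]
I work with the simple energy $E_2^{m,n}$, which has no $G(t)$ weight and no $g\circ$ term, rather than the dissipative one. The reason is that only a finite-time bound is needed, so Gronwall suffices.

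The main obstacle is the right-hand side. It contains $\partial_t^2H^{m,n}$, which involves $\partial_t\Phi^{m,n}_{x}$ and is not controlled by $E_2^{m,n}$. I would integrate it by parts in time. The term $(g(t)H(0),\partial_tH(t))$ is rewritten as $\frac{\mathrm{d}}{\mathrm{d}t}\big(g(t)H(0),\partial_tH\big)-g'(t)(H(0),\partial_tH)$. The convolution term is rewritten as
\[
\frac{\mathrm{d}}{\mathrm{d}t}\Big(\int_0^t g(t-s)\partial_tH(s)\,\mathrm{d}s,\partial_tH(t)\Big)-g(0)\|\partial_tH(t)\|_2^2-\Big(\int_0^t g'(t-s)\partial_tH(s)\,\mathrm{d}s,\partial_tH(t)\Big).
\]
After integrating over $(0,t)$, the boundary terms at time $t$ are handled by Young's inequality, $\epsilon\|\partial_tH(t)\|_2^2+C_\epsilon\|H(0)\|_2^2+C_\epsilon\int_0^t\|\partial_tH\|_2^2$. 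Here I use $\int_0^\infty g<1$ and the boundedness of $g$ and $g'$ on $I_T$. The $\epsilon$-terms are absorbed into $\frac{\kappa}{2}\|\partial_tH(t)\|_2^2$. The remaining integrals are bounded by $\int_0^t E_2^{m,n}(s)\,\mathrm{d}s+E_1^{m,n}(0)$, using $\|H(0)\|_2^2\lesssim E_1^{m,n}(0)$. Gronwall's inequality on $I_T$ then gives $E_2^{m,n}(t)\lesssim E_1^{m,n}(0)+E_2^{m,n}(0)$. The same computation with $n=0$ (the sums running over $i=1,\dots,m$) gives the first inequality in \eqref{esti_e2m}.

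Next I control the initial energies. At $t=0$ the memory terms vanish, so by \eqref{ab_ode},
\[
\rho_1\partial_t^2\phi^m(0)=\kappa P_m^{s}\,\partial_x\eta^m(0),\qquad \rho_2\partial_t^2\psi^m(0)=P_m^{c}\big(b\,\partial_x^2\psi^m(0)-\kappa\eta^m(0)\big),
\]
where $P_m^{s},P_m^{c}$ are the sine and cosine projections. Here the hypotheses enter. For $\phi_0\in H^2\cap H_0^1$, the sine projection commutes with $\partial_x^2$ in $L^2$, and $\partial_x$ maps sine projections to cosine projections. For $\psi_0\in H^2$ with $\psi_0'(0)=\psi_0'(l)=0$, the cosine coefficients of $\psi_0''$ equal $-c_i^2$ times those of $\psi_0$, because the boundary terms vanish by compatibility. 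Hence $\partial_x^2\psi^m(0)=P_m^c\psi_0''$ converges in $L^2$, and so does $\partial_x\eta^m(0)$. Consequently $E_2^m(0)\lesssim[\|\phi_0\|_{H^2}+\|\psi_0\|_{H^2}+\|\phi_1\|_{H_0^1}+\|\psi_1\|_{H_*^1}]^2$. The same identities for the differences are tails of convergent Fourier series, which gives $\lim_{n\to\infty}E_2^{m,n}(0)=0$ uniformly in $m$.

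Finally, the mixed-derivative bounds \eqref{eng_diff_equ2}–\eqref{eng_equ2} follow directly. The terms $\partial_t^2$ and $\partial_t\partial_x\psi$ are in the energy, and $\partial_t\partial_x\phi=\partial_t\eta-\partial_t\psi$, where $\|\partial_t\psi\|_2\lesssim\|\partial_t\psi_x\|_2$ by Poincar\'e on $H_*^1$. For the pure second spatial derivatives I use the Galerkin equations pointwise in $t$; this works because the basis diagonalizes $\partial_x^2$. We have $\kappa\,\partial_x\eta^m=\rho_1\partial_t^2\phi^m+\kappa\int_0^t g(t-s)\partial_x\eta^m(s)\,\mathrm{d}s$, which bounds $\|\partial_x\eta^m\|_2$ by a Volterra–Gronwall argument. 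Then $\phi^m_{xx}=\partial_x\eta^m-\psi^m_x$. Likewise $b\psi^m_{xx}=\rho_2\partial_t^2\psi^m+\kappa\eta^m-\kappa\int_0^t g\,\eta^m$. Each of these is bounded by $E_1$ and $E_2$, which completes the estimates.
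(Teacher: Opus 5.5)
Your proposal is correct, but the core energy estimate takes a different route from the paper's.

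\textbf{The paper's route.} The paper writes the derivative of the memory term as $\varrho^1=g(0)H^{m,n}+\int_0^t g'(t-s)H^{m,n}(s)\,\mathrm{d}s$. The right-hand side of the energy identity is then $-\kappa(\varrho^1_x,\Phi^{m,n}_{tt})+\kappa(\varrho^1,\Psi^{m,n}_{tt})$. This involves $\|H^{m,n}_x\|_2$, a second spatial derivative that is not in $E_2^{m,n}$. The paper recovers it at the same time from the pointwise Galerkin identity \eqref{gk2}, via a Volterra--Gronwall step. This yields the integro-differential inequality $\frac{\mathrm{d}}{\mathrm{d}t}E_2^{m,n}\lesssim E_2^{m,n}+\int_0^tE_2^{m,n}+E_1^{m,n}(0)$.

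\textbf{Your route.} You put the time derivative on $H^{m,n}$ inside the convolution, so the memory term is paired with $\partial_t^2H^{m,n}$. This is the same quantity as the paper's, since $-(\varrho^1_x,\Phi_{tt})+(\varrho^1,\Psi_{tt})=(\varrho^1,\partial_t^2H)$. You then integrate by parts in time, which moves the derivative back onto $\partial_tH^{m,n}$. The endpoint terms at time $t$ are absorbed into $\frac{\kappa}{2}\|\partial_tH^{m,n}(t)\|_2^2$, and an ordinary Gronwall inequality closes the estimate using only the boundedness of $g$ and $g'$ on $I_T$.

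\textbf{What each buys.} Your approach decouples the two ingredients. The energy bound itself needs no spatial recovery and does not rely on the basis diagonalizing $\partial_x^2$. The elliptic-type recovery of $\phi^m_{xx}$ and $\psi^m_{xx}$ is used only afterwards, for \eqref{eng_diff_equ2}--\eqref{eng_equ2}. The paper's approach gives a pointwise differential inequality directly, and that is the form it iterates in Lemma~\ref{qth_energy}. Your version would iterate just as well by the same time integration by parts.

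Your treatment of the initial energies is more explicit than the paper's one-line appeal to compatibility. You identify $\partial_t^2\phi^m(0)$ and $\partial_t^2\psi^m(0)$ as projections of $\phi_0''+\psi_0'$ and $b\psi_0''-\kappa(\phi_0'+\psi_0)$, and you pinpoint where $\psi_0'(0)=\psi_0'(l)=0$ enters. You also correctly note that the limit $E_2^{m,n}(0)\to0$ is uniform in $m$.

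Two slips to fix. First, $\partial_t^2H^{m,n}$ involves $\partial_t^2\Phi^{m,n}_x$, not $\partial_t\Phi^{m,n}_x$. Second, the term you integrate by parts is $(g(t)H^{m,n}(0),\partial_t^2H^{m,n}(t))$, not $(g(t)H^{m,n}(0),\partial_tH^{m,n}(t))$. The identities you display are already the correct ones for these terms.
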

\begin{proof}
Differentiate \eqref{gk_Phsi} w.r.t. $t$, multiply the first equation by $a^{\prime \prime}_i$, the second by $b^{\prime \prime}_i$,  sum over $i=n+1, \dots, m$, then add the resulting identities;  we obtain
    \begin{align}\label{dE2}
        \frac{\mathrm{d}}{\mathrm{d}t}E_2^{m,n}(t)
        = -\kappa(
        \varrho^1_x,
        \Phi_{tt}^{m,n}) + 
        \kappa(\varrho^1,
        \Psi_{tt}^{m,n}),
    \end{align}
where $\varrho^1(t) := \partial_t \left( \int_0^t g(t-s) H^{m,n}(s)\mathrm{d}s \right) = g(0)H^{m,n}
+\int_0^t g'(t-s)H^{m,n}(s)\,\mathrm{d}s$.
Applying Cauchy-Schwarz, Minkowski and Young's inequalities yields 
    \begin{align}
       & \left|\kappa(
        \varrho^1_x,
        \Phi_{tt}^{m,n})\right| 
        \lesssim 
        \left\| H_x^{m,n}\right \|^2_2 + \int_0^t \left\| H_x^{m,n}(s) \right \|^2_2 \mathrm{d}s +\|\Phi_{tt}^{m,n}\|^2_2,\label{dE2_1}\\
    	&\left| \kappa(\varrho^1,
    	\Psi_{tt}^{m,n}) \right| \lesssim 
    	E_1^{m,n}(0)   +\|\Psi_{tt}^{m,n}\|^2_2,\label{dE2_2}
    \end{align}
where we used $\| H^{m,n} \|^2_2 \lesssim E_1^{m,n}(0)$ from \eqref{e:Et0}. 

Let $A^1:= \kappa \Phi_{xx}^{m,n}-\rho_1 \Phi_{tt}^{m,n}+\kappa \Psi_x^{m,n} - \kappa \int_0^tg(t-s)(\Phi_x^{m,n}+\Psi^{m,n})_x(s)\mathrm{d}s$ and $A^2:= b \Psi_{xx}^{m,n}- \rho_2 \Psi_{tt}^{m,n} - \kappa H^{m,n} + \kappa \int_0^tg(t-s)(\Phi_x^{m,n} + \Psi^{m,n})(s)\mathrm{d}s$. By \eqref{gk_Phsi}, 
we know that for all $i \in \mathbb{N}_m$,
$(A^1, w_i)=0$ and $(A^2, v_i)=0$.
Let $W_m:=\text{span}\{w_1, \dots , w_m\}$ and $V_m:=\text{span}\{v_1, \dots , v_m\}$.
But $A^1 \in W_m$ and $A^2 \in V_m$, so one must have $A^1 \equiv 0$ and $A^2 \equiv 0$,
 i.e., for all $t \in I_T$,
    \begin{align}\label{gk2}
        \left.\left\{\begin{array}{l}
        \kappa \Phi_{xx}^{m,n}= \rho_1 \Phi_{tt}^{m,n}-\kappa \Psi_x^{m,n} + \kappa \int_0^tg(t-s)(\Phi_x^{m,n}+\Psi^{m,n})_x(s)\mathrm{d}s,\\
        b \Psi_{xx}^{m,n}= \rho_2 \Psi_{tt}^{m,n} + \kappa H^{m,n} - \kappa \int_0^tg(t-s)(\Phi_x^{m,n} + \Psi^{m,n})(s)\mathrm{d}s.
        \end{array}\right.\right.   
    \end{align}
Again by \eqref{e:Et0}, and the Gronwall lemma \cite[Lemma 2.1.15]{Bru04}, we obtain
    \begin{align}
       &  \| \Phi^{m,n}_{xx} \|_2 \lesssim \| \Phi^{m,n}_{tt} \|_2 + \sqrt{E_1^{m,n}(0)} + \int_0^t \| \Phi_{tt}^{m,n}(s) \|_2 \mathrm{d}s,   \label{Phi_xx_Phi_tt}\\
        & \| \Psi^{m,n}_{xx} \|_2 \lesssim \| \Psi^{m,n}_{tt} \|_2 + \sqrt{E_1^{m,n}(0)}.   \label{Psi_xx_Psi_tt}
    \end{align}
Consequently
    \begin{align*}
        \left\| H_x^{m,n} \right\|^2_2 \leq \left\| \Phi_{xx}^{m,n} \right\|^2_2+E_1^{m,n}(0)
        \lesssim
        \| \Phi^{m,n}_{tt} \|^2_2 + E_1^{m,n}(0) + \int_0^t \| \Phi_{tt}^{m,n}(s) \|^2_2 \mathrm{d}s,
    \end{align*}
which together with \eqref{dE2}, \eqref{dE2_1} and \eqref{dE2_2} yields that
     \begin{align*}
        \frac{\mathrm{d}}{\mathrm{d}t}E_2^{m,n}(t) \lesssim 
        E_2^{m,n}(t) + \int_0^t E_2^{m,n}(s)\mathrm{d}s +E_1^{m,n}(0).  
    \end{align*}   
Furthermore, integrating this inequality gives
\[
   E_2^{m,n}(t) \lesssim \sum_{p=1}^2 E_p^{m,n}(0) + \int_0^t E_2^{m,n}(s)\mathrm{d}s,
\] 
which together with the Gronwall lemma \cite[Lemma 2.1.15]{Bru04} yields the first inequality of \eqref{e:E2mn}, and the second one follows by the regularity of the initial data together with the compatibility condition.
Moreover, \eqref{eng_diff_equ2} follows from \eqref{Phi_xx_Phi_tt}, \eqref{e:E2mn} and \eqref{Psi_xx_Psi_tt}.

Similarly, we have the estimates
$E_2^{m}(t) \lesssim E_2^{m}(0) + E_1^{m}(0)$, $\| \phi^{m}_{xx} \|_2 \lesssim \| \phi^{m}_{tt} \|_2 + \sqrt{E_1^{m}(0)} + \int_0^t \| \phi_{tt}^{m}(s) \|_2 \mathrm{d}s$
and
$\| \psi^{m}_{xx} \|_2 \lesssim \| \psi^{m}_{tt} \|_2 +\sqrt{E_1^{m}(0)}$; and
    \begin{align}\label{gk3}
          \left.\left\{\begin{array}{l}
            \kappa \phi_{xx}^{m}= \rho_1 \phi_{tt}^{m}-\kappa \psi_x^{m} + \kappa \int_0^tg(t-s)(\phi_x^{m}+\psi^{m})_x(s)\mathrm{d}s ,\\
            b \psi_{xx}^{m}= \rho_2 \psi_{tt}^{m} + \kappa (\phi_x^{m}+\psi^{m}) - \kappa \int_0^tg(t-s)(\phi_x^{m} + \psi^{m})(s)\mathrm{d}s,
          \end{array}\right.\right.  \forall t\in I_T. 
    \end{align}
Thus, \eqref{eng_equ2} follows. In addition,
by the regularity of the initial data together with the compatibility conditions, 
    \begin{align*}
         \| \phi_{xx}^m(0) \|_2 \leq \| \phi_{0} \|_{H^2} ,\ \| \phi_{tt}^m(0) \|_2 \lesssim \| \phi_{0} \|_{H^2} + \| \psi_{0} \|_{H^2} ,\  \| \psi_{xx}^m(0) \|_2 \leq \| \psi_{0} \|_{H^2},
         \\
        \| \psi_{tt}^m(0) \|_2 \lesssim \| \phi_{0} \|_{H^2} + \| \psi_{0} \|_{H^2} ,\ \| \phi_{xt}^m(0) \|_2 \leq \| \phi_{1} \|_{H_0^1} ,\ \| \psi_{xt}^m(0) \|_2 \le \| \psi_{1} \|_{H_*^1},
    \end{align*}
which together with the estimate of \(E_1^m(0)\) in \eqref{eg_estimate}, yield \eqref{esti_e2m}.
\end{proof}

Now we state the strong well-posedness result.
\begin{theorem}\label{thm:strong_sol}
Assume that \((\phi_0,\phi_1,\psi_0,\psi_1)\in(H^2\cap H_0^1)\times H_0^1\times(H^2\cap H_*^1)\times H_*^1\) with the compatibility condition $\psi^{(1)}_0(0)=\psi^{(1)}_0(l)=0$.
Then the Timoshenko system \eqref{Timoshenko}--\eqref{IC} admits a unique strong solution \((\phi,\psi)\). Moreover,
\begin{align*}
\sum_{k=0}^2 \| \phi \|_{C^k(I_T;H^{2-k}\cap H_0^1)}
+\sum_{k=0}^2 \| \psi \|_{C^k(I_T;H^{2-k}\cap H_*^1)}
\\
\lesssim\;
\| \phi_{0} \|_{H^2} + \| \psi_{0} \|_{H^2}
+ \| \phi_{1} \|_{H_0^1} + \| \psi_{1} \|_{H_*^1},
\end{align*}
where $H^{0}\cap H_0^1:=L^2$ and $H^{0}\cap H_*^1:=L_*^2$.
\end{theorem}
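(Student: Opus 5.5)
The plan is to upgrade the weak solution of Theorem~\ref{thm:weak_sol} to a strong one, using the uniform second-order bounds of Lemma~\ref{lem:second_order_energy} on the Galerkin approximations \eqref{as}. Uniqueness needs no new argument. A strong solution is in particular a weak solution in the sense of Definition~\ref{weak_sol}: multiply \eqref{Timoshenko} by $w\in H_0^1$ and $v\in H_*^1$ and integrate by parts in $x$. The boundary terms vanish because $w(0)=w(l)=0$ and $\psi_x(0,t)=\psi_x(l,t)=0$. Uniqueness therefore follows from Theorem~\ref{thm:weak_sol}.

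\textbf{Existence.} By \eqref{e:E2mn} and \eqref{eng_diff_equ2}, for every $t\in I_T$,
\[
\sum_{k=0}^2\|\partial_t^k\partial_x^{2-k}\Phi^{m,n}(t)\|_2^2+\sum_{k=0}^2\|\partial_t^k\partial_x^{2-k}\Psi^{m,n}(t)\|_2^2\lesssim \sum_{p=1}^2E_p^{m,n}(0)\xrightarrow{n\to\infty}0 .
\]
The right-hand side does not depend on $t$, so the supremum over $I_T$ also tends to zero. The same holds for the lower-order quantities controlled by $E_1^{m,n}$ in Step~3 of Theorem~\ref{thm:weak_sol}. Poincar\'e's inequality on $H_0^1$ and $H_*^1$ controls $\|\Phi^{m,n}\|_2$ and $\|\Phi^{m,n}_t\|_2$ by $x$-derivatives, and likewise for $\Psi^{m,n}$. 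Hence $\{\phi^m\}$ is Cauchy in
\[
C(I_T;H^2\cap H_0^1)\cap C^1(I_T;H_0^1)\cap C^2(I_T;L^2),
\]
and $\{\psi^m\}$ is Cauchy in the analogous space with $H_*^1$ and $L_*^2$. Each $\phi^m$ and $\psi^m$ is $C^2$ in time, because $\boldsymbol X_i\in C^2$, so these are genuine memberships. The limits coincide with the weak solution $(\phi,\psi)$, since the convergence also holds in $C(I_T;L^2)$. Thus $(\phi,\psi)$ has exactly the regularity required by Definition~\ref{str_sol}.

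\textbf{The equations hold pointwise.} Pass to the limit in the strong Galerkin identities \eqref{gk3}. Each term converges in $C(I_T;L^2)$; for the convolution terms this uses
\[
\Big\|\int_0^tg(t-s)f(s)\,\mathrm{d}s\Big\|_2\le \|g\|_{L^1}\sup_s\|f(s)\|_2 .
\]
So \eqref{Timoshenko} holds in $L^2$ for every $t$, that is, for a.e.\ $x$. The Dirichlet condition on $\phi$ is inherited from $H_0^1$. The Neumann condition on $\psi$ needs a separate argument, and I expect this to be the \textbf{main obstacle}: $v_j'(0)=v_j'(l)=0$ only gives $\psi^m_x=0$ at the endpoints, and $\psi^m\to\psi$ in $C(I_T;H^2)$. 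By the embedding $H^2\hookrightarrow C^1([0,l])$, $\psi^m_x\to\psi_x$ uniformly on $[0,l]$, so $\psi_x(0,t)=\psi_x(l,t)=0$.

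This convergence in $H^2$ rests on the compatibility condition $\psi_0'(0)=\psi_0'(l)=0$, which was used in Lemma~\ref{lem:second_order_energy} to get $\lim_nE_2^{m,n}(0)=0$. The underlying reason is that cosine partial sums converge in $H^2$ only for functions with vanishing endpoint derivative. I would make this explicit when checking the initial data: $\phi^m(0)\to\phi_0$ in $H^2$ and $\psi^m(0)\to\psi_0$ in $H^2$ (the latter thanks to compatibility), while $\phi^m_t(0)\to\phi_1$ in $H_0^1$ and $\psi^m_t(0)\to\psi_1$ in $H_*^1$. This gives \eqref{IC}.

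\textbf{Stability estimate.} Take $m\to\infty$ in \eqref{esti_e2m} and \eqref{eng_equ2}, and combine with the first-order bound \eqref{eg_estimate}. The supremum over $t$ of
\[
\|\phi\|_{H^2}+\|\phi_t\|_{H_0^1}+\|\phi_{tt}\|_2+\|\psi\|_{H^2}+\|\psi_t\|_{H_*^1}+\|\psi_{tt}\|_2
\]
is then bounded by $\|\phi_0\|_{H^2}+\|\psi_0\|_{H^2}+\|\phi_1\|_{H_0^1}+\|\psi_1\|_{H_*^1}$. Summing over $k=0,1,2$ yields the stated estimate.
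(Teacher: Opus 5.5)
Your proposal is correct and follows the same route as the paper: Lemma~\ref{lem:second_order_energy} makes the Galerkin sequences Cauchy in the strong-solution spaces, you pass to the limit in \eqref{gk3}, uniqueness is inherited from Theorem~\ref{thm:weak_sol}, and the stability bound comes from \eqref{esti_e2m}--\eqref{eng_equ2}. Your explicit checks fill in details the paper leaves implicit: the Neumann condition via $H^2\hookrightarrow C^1([0,l])$, convergence of the convolution terms, attainment of the initial data, and the role of the compatibility condition.
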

\begin{proof}
	By the second-order energy estimate in Lemma  \ref{lem:second_order_energy}, the sequences $\{\phi^m\}$, $\{\psi^m\}$ are uniformly bounded and Cauchy respectively in the spaces $C(I_T; H^2\cap H_0^1) \cap C^1(I_T; H_0^1) \cap C^2(I_T; L^2)$
	and $C(I_T;H_*^1\cap H^2)\cap C^1(I_T;H_*^1)\cap C^2(I_T;L_*^2)$.	
Then, there exist limits $\phi$, $\psi$ belonging to the above spaces. Letting $m\to\infty$ in the Galerkin system \eqref{gk3} and using the convergence, we obtain that $(\phi,\psi)$ satisfies the original equations \eqref{Timoshenko}, the boundary conditions \eqref{BC} and the initial conditions \eqref{IC}. Hence $(\phi,\psi)$ is a strong solution.
Since every strong solution is also a weak solution, and the weak solution is unique by Theorem \ref{thm:weak_sol}, the strong solution is also unique. The stability estimate follows directly from Lemma \ref{lem:second_order_energy}.
\end{proof}

\section{Higher-order regularity}\label{s3}
In this section, we study the higher-order regularity of solutions to the Timoshenko system \eqref{Timoshenko}--\eqref{IC}. The key step is to extend the second-order energy estimate established in Lemma~\ref{lem:second_order_energy} to arbitrary orders \(q\ge 2\). We first derive the corresponding higher-order a priori estimates for the Galerkin approximations and then use them to obtain regularity results for strong solutions. 

The following lemma provides the required higher-order energy estimate.
\begin{lemma}\label{qth_energy}
Let \(q\in\mathbb{N}\) with \(q\geq 2\). Assume that
$(\phi_0,\phi_1,\psi_0,\psi_1)\in(H^q\cap H_0^1)\times (H^{q-1}\cap H_0^1)\times(H^q\cap H_*^1)\times (H^{q-1}\cap H_*^1)$, \(g\in C^{q-1}(I_T;\mathbb{R})\), and the initial data satisfy the following compatibility conditions at \(x=0,\ l\):
    \begin{itemize}
        \item If \(q=2r\) is even, 
            \begin{align}
                &\phi_0^{(2j)}(x)=0\quad (1\le j\le r-1),\quad
                \psi_0^{(2j-1)}(x)=0\quad (1\le j\le r),\label{compatibility_even}\\
                &\phi_1^{(2j)}(x)=0\quad (1\le j\le r-1),\quad
                \psi_1^{(2j-1)}(x)=0\quad (1\le j\le r-1).\label{compatibility_even_1}
            \end{align}
        \item If \(q=2r+1\) is odd,
            \begin{align}
                &\phi_0^{(2j)}(x)=0\quad (1\le j\le r),\quad
                \psi_0^{(2j-1)}(x)=0\quad (1\le j\le r),\label{compatibility_odd}\\
                &\phi_1^{(2j)}(x)=0\quad (1\le j\le r-1),\quad
                \psi_1^{(2j-1)}(x)=0\quad (1\le j\le r).\label{compatibility_odd_1}
            \end{align}
    \end{itemize}
These compatibility conditions ensure that the high-order spatial derivatives of the initial data are compatible with the sine expansion for \(\phi\) and the cosine expansion for \(\psi\). In particular, they allow repeated integration by parts without boundary contributions when identifying Sobolev norms with weighted Fourier coefficients.
Define the $q$-th order difference energy and the $q$-th order energy by
\begin{align*}
       & E_q^{m,n}(t):=\frac{1}{2} \left[ \rho_1 \| \partial_t^{q} \Phi^{m,n} \|_2^2 + \rho_2 \| \partial_t^{q} \Psi^{m,n} \|_2^2+ b \| \partial_t^{q-1} \Psi_{x}^{m,n} \|_2^2+ \kappa \| \partial_t^{q-1} H^{m,n} \|_2^2 \right],\\
      &  E_q^{m}(t):=\frac{1}{2} \left[ \rho_1 \| \partial_t^{q} \phi^{m} \|_2^2 + \rho_2 \| \partial_t^{q} \psi^{m} \|_2^2+ b \| \partial_t^{q-1} \psi_{x}^{m} \|_2^2+ \kappa \| \partial_t^{q-1} \eta ^{m} \|_2^2 \right],
  \end{align*}
where \(m,n\in\mathbb{N}\) and \(m\geq n\). Let $\mathbf{E}_q^{m,n}:=\sum_{p=1}^{q} E_p^{m,n}(0)$.
Then we have
    \[
        E_q^{m,n}(t) \lesssim \mathbf{E}_q^{m,n},\quad
        \lim_{n\to\infty}E_p^{m,n}(0)=0,\quad \forall p\in\{2,\dots,q\},
    \]
and
    \[
        \mathbf{E}_q^m=\sum_{p=1}^{q} E_p^{m}(0)
        \lesssim
        \left[ \| \phi_{0} \|_{H^q} + \| \psi_{0} \|_{H^q} + \| \phi_{1} \|_{H^{q-1}} + \| \psi_{1} \|_{H^{q-1}} \right]^2.
    \]
Moreover, all mixed derivatives with total order \(q\) of \((\Phi^{m,n},\Psi^{m,n})\) and \((\phi^{m},\psi^{m})\) satisfy
    \begin{align}
       & \sum_{k=0}^q \| \partial_t^k \partial_x^{q-k} \Phi^{m,n} \|_2^2
        +\sum_{k=0}^q \| \partial_t^k \partial_x^{q-k} \Psi^{m,n} \|_2^2
        \lesssim \mathbf{E}_q^{m,n},\label{eng_diff_equ}\\
        &\sum_{k=0}^q \| \partial_t^k \partial_x^{q-k} \phi^{m} \|_2^2
        +\sum_{k=0}^q \| \partial_t^k \partial_x^{q-k} \psi^{m} \|_2^2
        \lesssim \mathbf{E}_q^{m}.\label{eng_equ}
    \end{align}
\end{lemma}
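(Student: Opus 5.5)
We argue by induction on $q$, taking Lemma~\ref{lem:second_order_energy} as the base case $q=2$. Suppose the statement holds for all orders up to $q-1\ge 2$. Differentiate the Galerkin difference system \eqref{gk_Phsi} $q-1$ times in $t$; this is legitimate because $\boldsymbol{X}_i$ is as smooth in time as the kernel allows, and $g\in C^{q-1}$ together with \eqref{e:V2} gives $\boldsymbol{X}_i\in C^{q}$. Multiply the first equation by $a_i^{(q)}$ and the second by $b_i^{(q)}$, sum over $i=n+1,\dots,m$, and add. This yields
\[
\frac{\mathrm d}{\mathrm dt}E_q^{m,n}(t)=-\kappa\bigl(\varrho^{q-1}_x,\partial_t^q\Phi^{m,n}\bigr)+\kappa\bigl(\varrho^{q-1},\partial_t^q\Psi^{m,n}\bigr),
\qquad
\varrho^{q-1}:=\partial_t^{q-1}\int_0^t g(t-s)H^{m,n}(s)\,\mathrm ds .
\]
By Leibniz's rule,
\[
\varrho^{q-1}=\sum_{j=0}^{q-2}g^{(j)}(0)\,\partial_t^{q-2-j}H^{m,n}+\int_0^t g^{(q-1)}(t-s)H^{m,n}(s)\,\mathrm ds .
\]
So the right-hand side involves only terms $\partial_t^{k}\partial_x H^{m,n}$ with $k\le q-2$, which are derivatives of total order at most $q$, plus a memory integral of $H_x^{m,n}$. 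Apply Cauchy--Schwarz and Young as in \eqref{dE2_1}--\eqref{dE2_2}. The lower-order terms are bounded by $\mathbf E_{q-1}^{m,n}$ via the induction hypothesis \eqref{eng_diff_equ} at orders $\le q-1$. The only top-order pieces are $\|\partial_t^{q-2}\Phi_{xx}^{m,n}\|_2$ and $\|\partial_t^{q-2}\Psi_x^{m,n}\|_2$, together with $\|\partial_t^q\Phi^{m,n}\|_2$ and $\|\partial_t^q\Psi^{m,n}\|_2$.

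\textbf{Main obstacle.} The key step, which I expect to be the hardest, is controlling every mixed derivative $\partial_t^k\partial_x^{q-k}$ of total order $q$ by $E_q^{m,n}$ plus lower-order data. For this we use the pointwise identities \eqref{gk2}, valid because $A^1\in W_m$ and $A^2\in V_m$. Differentiate \eqref{gk2} $k$ times in $t$ and $j$ times in $x$. Each application trades two $x$-derivatives for two $t$-derivatives plus lower-order coupling and memory terms. Iterating from $k=q-2$ downwards expresses $\partial_t^{k}\partial_x^{q-k}\Phi^{m,n}$ and $\partial_t^{k}\partial_x^{q-k}\Psi^{m,n}$ through $\partial_t^{q}\Phi^{m,n}$, $\partial_t^{q}\Psi^{m,n}$, $\partial_t^{q-1}\Psi_x^{m,n}$ and $\partial_t^{q-1}H^{m,n}$ (hence also $\partial_t^{q-1}\Phi_x^{m,n}$). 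The remaining terms are either of order $<q$ or are time integrals of order-$q$ quantities. The $x$-differentiated memory integrals $\int_0^t g(t-s)\partial_x^{j}(\cdot)(s)\,\mathrm ds$ are handled by a Gronwall argument exactly as in \eqref{Phi_xx_Phi_tt}. In this way we obtain
\[
\frac{\mathrm d}{\mathrm dt}E_q^{m,n}\lesssim E_q^{m,n}+\int_0^tE_q^{m,n}\,\mathrm ds+\mathbf E_{q-1}^{m,n}.
\]
Integrating and applying the Gronwall lemma \cite[Lemma 2.1.15]{Bru04} gives $E_q^{m,n}(t)\lesssim\mathbf E_q^{m,n}$. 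Feeding this back into the elliptic-type relations yields \eqref{eng_diff_equ}. The same argument with $(\phi^m,\psi^m)$ in place of the differences yields \eqref{eng_equ}. The bookkeeping of parity is delicate: $x$-derivatives of sine series become cosine series and vice versa. We therefore work coefficientwise, where $\|\partial_x^{j}\phi^m\|_2^2=\sum c_i^{2j}a_i^2$, so no boundary terms appear inside the Galerkin space.

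Finally, we estimate the initial energies. At $t=0$, use \eqref{gk3} (and its difference analogue) repeatedly to convert $\partial_t^p\phi^m(0)$ and $\partial_t^p\psi^m(0)$ into spatial derivatives of the projections of $\phi_0,\psi_0$ (for even $p$) or $\phi_1,\psi_1$ (for odd $p$). Their $L^2$ norms reduce to weighted sums $\sum_i c_i^{2s}(\phi_0,w_i)^2$ and similar. These are bounded by $\|\phi_0\|_{H^s}^2$, respectively $\|\psi_0\|_{H^s}^2$, $\|\phi_1\|_{H^{s}}^2$, $\|\psi_1\|_{H^{s}}^2$, for $s\le q$ resp.\ $s\le q-1$. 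This identification holds because integrating by parts $s$ times, $(\phi_0,w_i)=\pm c_i^{-s}(\phi_0^{(s)},w_i\text{ or }v_i)$, produces no boundary terms precisely when the compatibility conditions \eqref{compatibility_even}--\eqref{compatibility_odd_1} hold. This gives the bound on $\mathbf E_q^m$. The same representation shows that $E_p^{m,n}(0)$ is a tail sum over $i>n$ of a convergent series, hence $E_p^{m,n}(0)\to0$ as $n\to\infty$.
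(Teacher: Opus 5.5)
Your proposal is correct and follows essentially the same route as the paper. The shared steps are: induction from Lemma~\ref{lem:second_order_energy}; the $t$-differentiated energy identity with the Leibniz expansion of $\varrho^{q-1}$; the pointwise relations \eqref{gk2}/\eqref{gk3} to trade spatial for temporal derivatives, with Gronwall needed only for the pure $x$-derivative of $\Phi$; and the compatibility conditions to identify the initial energies with weighted Fourier sums, whose tails give $\lim_{n\to\infty}E_p^{m,n}(0)=0$. Two small slips do not affect the argument: $\partial_t^{q-2}\Psi_x^{m,n}$ is of order $q-1$, not top order, and at $t=0$ the memory terms $g^{(p)}(0)$ (cf.\ \eqref{ak}--\eqref{bk}) mix the contributions of $(\phi_0,\psi_0)$ and $(\phi_1,\psi_1)$ rather than separating them by the parity of $p$, but these terms carry lower powers of $c_i$ and are harmless.
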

\begin{proof}
	We use induction. The case $q=2$ was proved already in Lemma \ref{lem:second_order_energy}. Suppose that the result is valid for some $q \in \mathbb{N}$ with $q \geq 2$. We will prove it is also valid for the case $q+1$. The proof is divided into five steps.
	
	\medskip
	\noindent\textbf{Step 1. Construction of the Galerkin system}
	\medskip
	
	By Step 1 of the proof for Theorem \ref{thm:weak_sol}, we obtain the VIE \eqref{e:V2} with $\boldsymbol{K}_i \in C^{q+1}(D_T;\mathbb{R}^{4 \times 4})$
	since $g \in C^{q}(I_T;\mathbb{R})$, which has a unique solution $\boldsymbol{X}_i\in C^{q+1}(I_T;\mathbb{R}^{4})$ (see \cite[Theorem 2.1.7]{Bru04}). In particular, $a_i,\ b_i \in C^{q+2}(I_T)$ for all $i \in \mathbb{N}_m$.
	
	\medskip
	\noindent\textbf{Step 2. Estimates for $E_{q+1}^{m,n}(t)$ and $E_{q+1}^{m,n}(0)$}
	\medskip
	
	Differentiate \eqref{gk_Phsi} $q$ times w.r.t. $t$, multiply the first equation by $a^{(q+1)}_i$ and the second by $b^{(q+1)}_i$, sum over $i = n+1, \dots, m$, and add the two resulting identities, then
	\begin{align}\label{dEq}
		\frac{\mathrm{d}}{\mathrm{d}t}E_{q+1}^{m,n}(t)  
		=-\kappa\left(\varrho^q_x(t),\partial_t^{q+1}\Phi^{m,n}\right)+\kappa\left( \varrho^q(t),\partial_t^{q+1}\Psi^{m,n}\right),
	\end{align}
where
	$\varrho^q(t) :=\partial_t^q \left( \int_0^t g(t-s) H^{m,n}(s)\mathrm{d}s \right) = \sum_{k=0}^{q-1}g^{(k)}(0)\partial_t^{q-1-k}H^{m,n}(t)+$ $\int_0^t g^{(q)}(t - s) H^{m,n}(s) \mathrm{d}s$.
By the Cauchy-Schwarz, Minkowski and Young's inequalities, 
and the induction hypothesis \eqref{eng_diff_equ}, we obtain
	\begin{align}
	&	\left|\kappa\left(\varrho^q_x(t),\partial_t^{q+1}\Phi^{m,n}\right)\right|
		\lesssim 
		\left\| g(0) \partial_t^{q-1}H_x^{m,n} \right \|^2_2 + \mathbf{E}^{m,n}_q    +\| \partial_t^{q+1}\Phi^{m,n} \|^2_2, \label{dEq_1}\\
	&	\left| \kappa \left(\varrho^q(t),\partial_t^{q+1} \Psi^{m,n}\right) \right| \lesssim 
		\mathbf{E}_q^{m,n}   +\|\partial_t^{q+1} \Psi^{m,n}\|^2_2.\label{dEq_2}
	\end{align}
Differentiate the first identity in \eqref{gk2} $q-1$ times w.r.t. $t$ to get
\begin{align}\label{q_Phi_xx_eq_Phi_tt}
	\kappa \partial_t^{q-1} \Phi_{xx}^{m,n}=& \rho_1 \partial_t^{q+1}\Phi^{m,n}-\kappa \partial_t^{q-1} \Psi_x^{m,n} +  \kappa \varrho_x^{q-1}.        
\end{align}
Using the induction hypothesis \eqref{eng_diff_equ}, it follows that
	\begin{align*}
		\| \partial_t^{q-1} \Phi^{m,n}_{xx} \|^2_2 \lesssim \| \partial_t^{q+1}\Phi^{m,n} \|^2_2 + \mathbf{E}_q^{m,n},  
	\end{align*}
which together with \eqref{dEq}, \eqref{dEq_1} and \eqref{dEq_2} yields
	\begin{align*}
		\frac{\mathrm{d}}{\mathrm{d}t}E_{q+1}^{m,n}(t)    
		\lesssim E_{q+1}^{m,n}(t) + \mathbf{E}_{q}^{m,n}.
	\end{align*}
Integrating and applying Gronwall's lemma \cite[Lemma 2.1.15]{Bru04} gives
	\begin{align*}
		E_{q+1}^{m,n}(t) 
		\lesssim E_{q+1}^{m,n} (0)
		+ \mathbf{E}_{q}^{m,n} = \mathbf{E}_{q+1}^{m,n}.
	\end{align*}
	We next prove $\lim\limits_{n \to \infty} E_{q+1}^{m,n}(0)=0$. 
Differentiating \eqref{ab_ode} $k$ times w.r.t. $t$ with $k \in \mathbb{N}$ 
then evaluating at $t=0$, one gets	
	\begin{align}
	&	a^{(k+2)}_j(0)=-\kappa_1 c_j d^{(k)}_{j}(0)
		+\kappa_1  c_j \sum _{p=0}^{k-1} g^{(p)}(0) d_j^{(k-1-p)}(0),\label{ak}\\
	&	b^{(k+2)}_j(0)=- \kappa_2  d^{(k)}_{j}(0)- \bar{b} c^2_j b^{(k)}_{j}(0)
		+\kappa_2  \sum _{p=0}^{k-1} g^{(p)}(0) d_j^{(k-1-p)}(0).   \label{bk} 
	\end{align}
Observe that when $k=1$, the right-hand sides of \eqref{ak} and \eqref{bk} are linear combinations of
	$a_j(0)$, $a^{\prime}_j(0)$, $b_j(0)$ and $b^{\prime}_j(0)$,
	with coefficients that are polynomials in $c_j$. 
	Iterating \eqref{ak} and \eqref{bk}, one sees that $a^{(q-1)}_j(0)$ is also such a combination, and
	\begin{equation*}
		\begin{split}
			\partial_t^{q-1}\Phi_{xx}^{m,n}(0)
			=& \sum_{j=n+1}^m c_j^2 a_j^{(q-1)}(0) w_j= \sum_{j=n+1}^m c_j^2 P_{q-1}(c_j)a_j(0)w_j + c_j^2 Q_{q-2}(c_j)a'_j(0)w_j \\
			&+\sum_{j=n+1}^m c_j^2 R_{q-1}(c_j)b_j(0)w_j + c_j^2 S_{q-2}(c_j)b'_j(0)w_j;
		\end{split}
	\end{equation*}
here, if $q$ is odd, then $P_{q-1}$ and $R_{q-1}$ are polynomials of degree $q-1$, while $Q_{q-2}$ and $S_{q-2}$ are polynomials of degree $q-3$; if $q$ is even, then $P_{q-1}$, $R_{q-1}$, $Q_{q-2}$ and $S_{q-2}$ are polynomials of degree $q-2$. Thus, we now consider these two cases separately.
	
	If $q$ is odd, the assumptions $(\phi_0,\phi_1,\psi_0,\psi_1)\in(H^{q+1}\cap H_0^1)\times (H^{q}\cap H_0^1)\times(H^{q+1}\cap H_*^1)\times (H^{q}\cap H_*^1)$,
	and the $q+1$-th order compatibility conditions \eqref{compatibility_even}--\eqref{compatibility_even_1} yield
	\begin{align*}
		&\left\| \phi_0^{(q+1)} \right\|^2_2=\Big\| \sum_{j=1}^{\infty} \left( \phi _0^{(q+1)}, w_j \right) w_j \Big\|^2_2 =  \sum_{j=1}^{\infty} [c^{q+1}_j \left(  \phi _0, w_j \right)]^2
		=  \sum_{j=1}^{\infty} [c^{q+1}_j a_j(0)]^2 <+\infty;
	\end{align*}
	similarly,
$\left\| \psi_0^{(q+1)} \right\|^2_2=  \sum_{j=1}^{\infty} [c^{q+1}_j b_j(0)]^2<+\infty$, $\left\|  \phi_1^{(q-1)} \right\|^2_2=  \sum_{j=1}^{\infty} [c^{q-1}_j a'_j(0)]^2<+\infty$ and $\left\|  \psi_1^{(q-1)} \right\|^2_2=  \sum_{j=1}^{\infty} [c^{q-1}_j b'_j(0)]^2<+\infty$.

	If $q$ is even, then 
	$(\phi_0,\phi_1,\psi_0,\psi_1)$ satisfies the $q+1$-th order compatibility conditions \eqref{compatibility_odd}--\eqref{compatibility_odd_1}. Thus,
$\left\|  \phi_0^{(q)} \right\|^2_2=  \sum_{j=1}^{\infty} [c^{q}_j a_j(0)]^2<+\infty$, $\left\| \psi_0^{(q)} \right\|^2_2=  \sum_{j=1}^{\infty} [c^{q}_j b_j(0)]^2<+\infty$, and $\left\| \phi_1^{(q)} \right\|^2_2=  \sum_{j=1}^{\infty} [c^{q}_j a'_j(0)]^2<+\infty,\quad \left\| \psi_1^{(q)} \right\|^2_2=  \sum_{j=1}^{\infty} [c^{q}_j b'_j(0)]^2<+\infty$.

	Hence, in both cases
	$\lim\limits_{n \to \infty} \| \partial_t^{q-1} \Phi^{m,n} _{xx}(0) \|^2_2=0$. 
Similarly,
	$$
	\lim\limits_{n \to \infty} \| \partial_t^{q-1} \Psi^{m,n} _{xx}(0) \|^2_2
	=
	\lim\limits_{n \to \infty} \| \partial_t^{q} \Phi^{m,n} _{x}(0) \|^2_2
	=
	\lim\limits_{n \to \infty} \| \partial_t^{q} \Psi^{m,n} _{x}(0) \|^2_2
	=0.
	$$
	Setting $t=0$ in \eqref{q_Phi_xx_eq_Phi_tt}, we infer that
	$\lim\limits_{n \to \infty} \| \partial_t^{q+1} \Phi^{m,n} (0) \|^2_2=0$.
Similarly, differentiate the second equation in \eqref{gk2} $q-1$ times w.r.t. $t$,
then set $t=0$;  one obtains
	$\lim\limits_{n \to \infty} \| \partial_t^{q+1} \Psi^{m,n} (0) \|^2_2=0$.
	Consequently, we have proved that
	$\lim\limits_{n \to \infty} \mathbf{E}_{q+1}^{m,n}=0$.
	
	\medskip
	\noindent\textbf{Step 3. Showing that all derivatives of order \(q+1\) for $\Phi^{m,n}$ and $\Psi^{m,n}$ are bounded by $\mathbf{E}_{q+1}^{m,n}$}
	\medskip
	
	We shall prove the following two inequalities by induction:
	$$
	\| \partial_t^{q+1-k} \partial_x^k \Phi^{m,n} \|^2_2  \lesssim \mathbf{E}_{q+1}^{m,n} ,\quad \| \partial_t^{q+1-k} \partial_x^k \Psi^{m,n} \|^2_2 \lesssim \mathbf{E}_{q+1}^{m,n} \ \forall k \in \{ 0,\dots , q+1 \}.
	$$
	The cases $k=0,1,2$ have already been established in Step 2. Assume now that the estimate holds for all integers up to $k$, and consider the case $k+1$. Applying the operator $\partial^{q-k}_t \partial^{k-1}_x$ to the first equation in \eqref{gk2}, we obtain
		\begin{align*}
		\kappa \partial_t^{q-k}\partial_x^{k+1}\Phi^{m,n}
		= \rho_1 \partial_t^{q+1-(k-1)}\partial_x^{k-1}\Phi^{m,n}
		-\kappa \partial_t^{q-k}\partial_x^k \Psi^{m,n} 
		+ \partial_x^{k-1}\varrho^{q-k}.
	\end{align*}
	By the induction hypothesis, if $k \leq q-1$, then
	$\| \partial_t^{q-k} \partial_x^{k+1} \Phi^{m,n} \|^2_2  \lesssim \mathbf{E}_{q+1}^{m,n}$;
	if $k = q$, then a Gronwall inequality \cite[Lemma 2.1.15]{Bru04} gives
	$\| \partial_x^{q+1} \Phi^{m,n} \|^2_2  \lesssim \mathbf{E}_{q+1}^{m,n}$.
	Similarly, applying the operator $\partial^{q-k}_t \partial^{k-1}_x$ to the second equation in \eqref{gk2} and using the induction hypothesis, we obtain
	$\| \partial_t^{q-k} \partial_x^{k+1} \Psi^{m,n} \|^2_2  \lesssim \mathbf{E}_{q+1}^{m,n} ,\ \forall k \in \{1, \dots, q \}$.
	
	\medskip
	\noindent\textbf{Step 4. Estimates for $E_{q+1}^{m}(t)$ and $E_{q+1}^{m}(0)$}
	\medskip
	
Similarly to Step 2, one has
	$E^m_{q+1}(t) \lesssim \mathbf{E}_{q+1}^m$.
	By the induction hypothesis,
	$$
	\mathbf{E}_q^m \lesssim \left[ \| \phi_{0} \|_{H^{q+1}} + \| \psi_{0} \|_{H^{q+1}} + \| \phi_{1} \|_{H^{q}} + \| \psi_{1} \|_{H^{q}} \right]^2;
	$$
	therefore, it remains to prove that
	$$
	E^m_{q+1}(0) \lesssim \left[\| \phi_{0} \|_{H^{q+1}} + \| \psi_{0} \|_{H^{q+1}} + \| \phi_{1} \|_{H^{q}} + \| \psi_{1} \|_{H^{q}} \right]^2.
	$$
	From the analysis of $ \partial_t^{q-1} \Phi^{m,n} _{xx}(0) $ in Step 2, we know that
	\begin{align*}
		\partial_t^{q-1} \phi^{m} _{xx}(0) &= \sum_{j=1}^m c_j^2 P_{q-1}(c_j) a_j(0) w_j + c_j^2 Q_{q-2}(c_j)  a^{\prime}_j(0) w_j \\
		&+ \sum_{j=1}^m c_j^2 R_{q-1}(c_j) b_j(0) w_j + c_j^2 S_{q-2}(c_j) b^{\prime}_j(0) w_j.
	\end{align*}
	If $q$ is odd, then the $q+1$-th order compatibility conditions \eqref{compatibility_even}--\eqref{compatibility_even_1} imply that
\begin{align*}
	\Big\|  \sum_{j=1}^m c_j^{k} a_j(0) w_j \Big\|^2_2  \leq \|  \phi_0^{(k)} \|^2_2 ,\quad
	\Big\|  \sum_{j=1}^m c_j^{k} b_j(0) v_j \Big\|^2_2  \leq \|  \psi_0^{(k)} \|^2_2 ,\quad 0 \leq k \leq q+1,\\
	\Big\|  \sum_{j=1}^m c_j^{k} a^{\prime}_j(0) w_j \Big\|^2_2  \leq \|  \phi_1^{(k)} \|^2_2 ,\quad
	\Big\|  \sum_{j=1}^m c_j^{k} b^{\prime}_j(0) v_j \Big\|^2_2  \leq \|  \psi_1^{(k)} \|^2_2 ,\quad 0 \leq k \leq q-1;
	\end{align*}
hence,
	\begin{align*}
		\| \partial_t^{q-1} \phi^{m} _{xx}(0) \|_2^2 &\lesssim \sum_{k=0}^{q+1} \| \phi _0^{(k)} \|^2_2 + \sum_{k=0}^{q+1} \| \psi_0^{(k)} \|^2_2 + \sum_{k=0}^{q-1} \| \phi_1^{(k)} \|^2_2 + \sum_{k=0}^{q-1} \| \psi_1^{(k)} \|^2_2\\
		&\lesssim  \left[ \| \phi_{0} \|_{H^{q+1}} + \| \psi_{0} \|_{H^{q+1}} + \| \phi_{1} \|_{H^{q}} + \| \psi_{1} \|_{H^{q}} \right]^2.
	\end{align*}
	If $q$ is even, then we get similarly
	\begin{align*}
		\| \partial_t^{q-1} \phi^{m} _{xx}(0) \|_2^2 &\lesssim \sum_{k=0}^{q} \| \phi _0^{(k)} \|^2_2 + \sum_{k=0}^{q} \| \psi_0^{(k)} \|^2_2 + \sum_{k=0}^{q} \| \phi_1^{(k)} \|^2_2 + \sum_{k=0}^{q} \| \psi_1^{(k)} \|^2_2\\
		&\lesssim  \left[ \| \phi_{0} \|_{H^{q+1}} + \| \psi_{0} \|_{H^{q+1}} + \| \phi_{1} \|_{H^{q}} + \| \psi_{1} \|_{H^{q}} \right]^2.
	\end{align*}
	Applying the same argument to $\partial_t^{q-1} \psi^{m,n} _{xx}(0)$, $\partial_t^{q} \phi_x^{m,n} (0)$ 
	and $\partial_t^{q} \psi^{m,n} _{x}(0)$ gives
	\begin{align*}
		\| \partial_t^{q-1} \psi^{m} _{xx}(0) \|_2^2 \lesssim  \left[ \| \phi_{0} \|_{H^{q+1}} + \| \psi_{0} \|_{H^{q+1}} + \| \phi_{1} \|_{H^{q}} + \| \psi_{1} \|_{H^{q}} \right]^2,\\
		\| \partial_t^{q} \phi^{m} _{x}(0) \|_2^2 \lesssim  \left[ \| \phi_{0} \|_{H^{q+1}} + \| \psi_{0} \|_{H^{q+1}} + \| \phi_{1} \|_{H^{q}} + \| \psi_{1} \|_{H^{q}} \right]^2,\\
		\| \partial_t^{q} \psi^{m} _{x}(0) \|_2^2 \lesssim  \left[ \| \phi_{0} \|_{H^{q+1}} + \| \psi_{0} \|_{H^{q+1}} + \| \phi_{1} \|_{H^{q}} + \| \psi_{1} \|_{H^{q}} \right]^2.
	\end{align*}
	Applying the operator $\partial^{q-1}_t$ to both equations in \eqref{gk3} and then setting $t=0$, and combining this with the induction hypothesis \eqref{eng_equ} as well as the above estimates for
	$\|   \partial_t^{q-1} \phi^{m} _{xx}(0) \|_2^2 $
	and
	$\|   \partial_t^{q-1} \psi^{m} _{xx}(0) \|_2^2 $,
	we obtain
	$$
	\| \partial_t^{q+1}\phi^{m}(0) \|_2^2 + \| \partial_t^{q+1}\psi^{m}(0) \|_2^2 \lesssim \left[ \| \phi_{0} \|_{H^{q+1}} + \| \psi_{0} \|_{H^{q+1}} + \| \phi_{1} \|_{H^{q}} + \| \psi_{1} \|_{H^{q}} \right]^2,
	$$
	which together with the estimates for $\|   \partial_t^{q} \phi_x^{m} (0) \|_2^2$ and $\|   \partial_t^{q} \psi_x^{m} (0) \|_2^2$, yields
	$$
	E^m_{q+1}(0) \lesssim \left[ \| \phi_{0} \|_{H^{q+1}} + \| \psi_{0} \|_{H^{q+1}} + \| \phi_{1} \|_{H^{q}} + \| \psi_{1} \|_{H^{q}} \right]^2.
	$$
	
	\medskip
	\noindent\textbf{Step 5. Showing that all derivatives of order \(q+1\) for $\phi^{m}$ and $\psi^{m}$ are bounded by $\mathbf{E}_{q+1}^{m}$}
	\medskip
	
	The proof is entirely analogous to Step 3, with $(\Phi^{m,n},\Psi^{m,n})$ replaced by $(\phi^{m},\psi^{m})$ and $\mathbf{E}_{q+1}^{m,n}$ replaced by $\mathbf{E}_{q+1}^{m}$. Using induction together with \eqref{gk3}, we obtain
	$$
	\| \partial_t^{q+1-k} \partial_x^k \phi^{m} \|^2_2  \lesssim \mathbf{E}_{q+1}^{m} ,\ \| \partial_t^{q+1-k} \partial_x^k \psi^{m} \|^2_2 \lesssim \mathbf{E}_{q+1}^{m} ,\ \forall k \in \{ 0,\dots , q+1 \}.
	$$
	This completes the induction and hence the proof of the lemma.
\end{proof}

The higher-order energy estimate of the previous lemma yields the following regularity theorem for strong solutions.
\begin{theorem}\label{regularity}
	Let \(q\in\mathbb{N}\) with \(q\geq 2\). Assume that
	\((\phi_0,\phi_1,\psi_0,\psi_1)\in(H^q\cap H_0^1)\times (H^{q-1}\cap H_0^1)\times(H^q\cap H_*^1)\times (H^{q-1}\cap H_*^1)\), \(g\in C^{q-1}(I_T)\), and that the initial data satisfy the compatibility conditions \eqref{compatibility_even}--\eqref{compatibility_even_1} or \eqref{compatibility_odd}--\eqref{compatibility_odd_1}. Then the Timoshenko system \eqref{Timoshenko}--\eqref{IC} admits a unique strong solution \((\phi,\psi)\) such that
	\[
	\phi\in \bigcap_{k=0}^q C^k\left(I_T;H^{q-k}\cap H_0^1\right),\quad
	\psi\in \bigcap_{k=0}^q C^k\left(I_T;H^{q-k}\cap H_*^1\right),
	\]
	and the following stability estimate holds:
	\begin{align*}
		&\sum_{k=0}^q \| \phi \|_{C^k\left(I_T;H^{q-k}\cap H_0^1\right)}
		+\sum_{k=0}^q \| \psi \|_{C^k\left(I_T;H^{q-k}\cap H_*^1\right)}
		\\
		&\lesssim
		\| \phi_0 \|_{H^q}+ \| \phi_1 \|_{H^{q-1}}+\| \psi_0 \|_{H^q}+ \| \psi_1 \|_{H^{q-1}}.
	\end{align*}
\end{theorem}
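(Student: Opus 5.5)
The proof will follow the pattern of Theorem~\ref{thm:strong_sol}, now driven by Lemma~\ref{qth_energy} in place of Lemma~\ref{lem:second_order_energy}. Since $q\ge2$, the hypotheses contain those of Theorem~\ref{thm:strong_sol}: the data lie in $(H^2\cap H_0^1)\times H_0^1\times(H^2\cap H_*^1)\times H_*^1$, and the compatibility conditions include $\psi_0^{(1)}(0)=\psi_0^{(1)}(l)=0$ (the case $j=1$ for $\psi_0$, present for both parities). Hence a unique strong solution $(\phi,\psi)$ exists. It is the limit of the Galerkin approximations $(\phi^m,\psi^m)$ in \eqref{as}, and uniqueness is already known from Theorem~\ref{thm:weak_sol}. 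What remains is to prove that this limit has the stated higher regularity and satisfies the stability estimate.

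\textbf{Cauchy property.} Fix $k\in\{0,\dots,q\}$. For every order $p\in\{1,\dots,q\}$, estimate \eqref{eng_diff_equ} from Lemma~\ref{qth_energy}, applied at order $p$ (order $1$ is handled by \eqref{e:Et0}), gives
\[
\sup_{t\in I_T}\|\partial_t^k\partial_x^{p-k}\Phi^{m,n}(t)\|_2^2\lesssim \mathbf{E}_p^{m,n}\le \mathbf{E}_q^{m,n}.
\]
Lemma~\ref{qth_energy} also gives $\lim_{n\to\infty}E_p^{m,n}(0)=0$ for $p\ge2$, and Step~2 of Theorem~\ref{thm:weak_sol} gives the same for $p=1$. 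Therefore $\mathbf{E}_q^{m,n}\to0$, uniformly in $m\ge n$. Summing over the spatial orders $p-k\le q-k$, and using Poincaré for the lowest-order terms, we obtain
\[
\sup_t\|\partial_t^k(\phi^m-\phi^n)(t)\|_{H^{q-k}}\to0 .
\]
Each $\phi^m$ lies in $C^{q}(I_T;W_m)$ by Step~1 of Lemma~\ref{qth_energy}, and $W_m\subset H^{q-k}\cap H_0^1$. So $\{\phi^m\}$ is Cauchy in the Banach space $\bigcap_{k=0}^q C^k(I_T;H^{q-k}\cap H_0^1)$. The same argument shows $\{\psi^m\}$ is Cauchy in $\bigcap_{k=0}^q C^k(I_T;H^{q-k}\cap H_*^1)$; here $V_m\subset H_*^1$ because the cosines with $j\ge1$ have zero mean.

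\textbf{Identifying the limit.} Both spaces embed continuously into the strong-solution spaces of Definition~\ref{str_sol}. The Galerkin limit is therefore the strong solution from Theorem~\ref{thm:strong_sol}, and the time derivatives of the limit are the limits of $\partial_t^k\phi^m$, because convergence in $C^k$ commutes with differentiation.

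\textbf{Stability estimate.} By \eqref{eng_equ} at every order $p\le q$, together with \eqref{eg_estimate} at $p=1$,
\[
\sum_{k}\sup_t\|\partial_t^k\phi^m\|_{H^{q-k}}^2\lesssim \mathbf{E}_q^m\lesssim\left[\|\phi_0\|_{H^q}+\|\psi_0\|_{H^q}+\|\phi_1\|_{H^{q-1}}+\|\psi_1\|_{H^{q-1}}\right]^2 .
\]
Here we use that the compatibility conditions at order $q$ imply those at every lower order $p$, so Lemma~\ref{qth_energy} applies at each $p$. Passing to the limit $m\to\infty$, using lower semicontinuity or the strong convergence just proved, yields the stated estimate for $(\phi,\psi)$. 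The estimate for $\psi$ is identical.

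\textbf{Main obstacle.} The delicate point is bookkeeping rather than analysis. Lemma~\ref{qth_energy} bounds only the mixed derivatives of \emph{exact} total order $q$, whereas the $C^k(I_T;H^{q-k})$ norms require every derivative of total order at most $q$. I will close this gap by invoking the lemma for each $p\le q$, which requires checking that the order-$q$ compatibility conditions \eqref{compatibility_even}--\eqref{compatibility_odd_1} contain the order-$p$ ones. They do: for both data and both parities, the index ranges for order $p$ are subsets of those for order $q$.

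I also have to check that $\lim_n E_p^{m,n}(0)=0$ holds uniformly in $m$. This follows because the quantities involved are tails $\sum_{j>n}$ of convergent series such as $\sum_j c_j^{2p}a_j(0)^2$, which are controlled as in Step~2 of Lemma~\ref{qth_energy}.
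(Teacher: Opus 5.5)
Your proposal is correct and follows essentially the paper's route. You build the Galerkin Cauchy sequences from Lemma~\ref{qth_energy}, identify the limit with the strong solution of Theorem~\ref{thm:strong_sol} (uniqueness coming from Theorem~\ref{thm:weak_sol}), and pass to the limit in \eqref{eng_equ} for the stability bound. The only difference is organizational: the paper inducts on $q$ and takes the lower-order norms from the induction hypothesis, whereas you apply Lemma~\ref{qth_energy} at every order $p\le q$, which makes explicit two facts the paper uses tacitly — the nesting of the compatibility conditions and the uniformity in $m$ of $E_p^{m,n}(0)\to 0$.
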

\begin{proof}
We proceed by induction on $q$. The case $q=2$ is exactly Theorem \ref{thm:strong_sol}. Assume that the theorem holds for some $q\ge 2$ and consider the case $q+1$. 
The higher-order energy estimates in Lemma \ref{qth_energy} yield  $E_{q+1}^{m,n}(t)\lesssim \mathbf{E}_{q+1}^{m,n}$ and $\lim_{n\to\infty}\mathbf{E}_{q+1}^{m,n}=0$; consequently, all mixed derivatives $\partial_t^k\partial_x^{q+1-k}$ are bounded in the $L^2$ norm by $\mathbf{E}_{q+1}^{m,n}$. Therefore $\{\phi^m\}$ and $\{\psi^m\}$ are Cauchy sequences in the spaces $C^k(I_T;H^{q+1-k}\cap H_0^1)$ and $C^k(I_T;H^{q+1-k}\cap H_*^1)$ ($k=0,\dots,q+1$), respectively, and their limits exist; denote them by $(\phi,\psi)$. By Theorem \ref{thm:strong_sol}, this limit is the unique strong solution. Finally, combining the stability estimate of Lemma \ref{qth_energy} with the induction hypothesis yields
	\begin{align*}
		&\sum_{k=0}^{q+1} \| \phi \|_{C^k\left(I_T;H^{q+1-k}\cap H_0^1\right)}
		+\sum_{k=0}^{q+1} \| \psi \|_{C^k\left(I_T;H^{q+1-k}\cap H_*^1\right)}
		\\
		&\lesssim
		\| \phi_0 \|_{H^{q+1}}+ \| \phi_1 \|_{H^{q}}+\| \psi_0 \|_{H^{q+1}}+ \| \psi_1 \|_{H^{q}}.
	\end{align*}
so the theorem holds for $q+1$. This completes the induction.
\end{proof}
\section{Fourier spectral Galerkin semi-discrete and fully discrete schemes}\label{s4}

In this section, we consider the numerical approximation of \eqref{Timoshenko}--\eqref{IC}. 
For the spatial discretization, a Fourier spectral Galerkin method is employed.
Let
\begin{align*}
	\phi^N(x,t)=\sum_{j=1}^N a_{j}(t)w_j(x),\quad \psi^N(x,t)=\sum_{j=1}^N b_{j}(t)v_j(x).   
\end{align*}
Assuming the regularity stated in Theorem~\ref{regularity}, we establish the following error bound.

\begin{theorem}\label{space_esti}
	Let \(q\in\mathbb{N}\) with \(q\geq 2\). Assume that the hypotheses of Theorem~\ref{regularity} hold. Let \((\phi,\psi)\) be the unique strong solution of \eqref{Timoshenko}--\eqref{IC}. Then
	\begin{align*}
		&\| \phi - \phi^N \|_{C(I_T;L^2)}
		+\| \psi - \psi^N \|_{C(I_T;L^2)} \\
		&\lesssim
		N^{-q}\left[
		\| \phi_0 \|_{H^q}+\| \phi_1 \|_{H^{q-1}}
		+\| \psi_0 \|_{H^q}+\| \psi_1 \|_{H^{q-1}}
		\right].
	\end{align*}
\end{theorem}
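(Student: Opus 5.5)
The plan is to compare $(\phi^N,\psi^N)$ with the $L^2$-orthogonal projections of the exact solution and to control the gap through the energy identity of Theorem~\ref{thm:weak_sol}. Let $\Pi_N^s$ be the orthogonal projection onto $W_N=\mathrm{span}\{w_1,\dots,w_N\}$ and $\Pi_N^c$ the one onto $V_N=\mathrm{span}\{v_1,\dots,v_N\}$. We split
\[
\phi-\phi^N=(\phi-\Pi_N^s\phi)+(\Pi_N^s\phi-\phi^N),\qquad
\psi-\psi^N=(\psi-\Pi_N^c\psi)+(\Pi_N^c\psi-\psi^N).
\]

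\textbf{Step 1: projection error.} By Theorem~\ref{regularity}, $\phi(t)\in H^q\cap H_0^1$ and $\psi(t)\in H^q\cap H_*^1$ for every $t\in I_T$. The compatibility conditions \eqref{compatibility_even}--\eqref{compatibility_odd_1} propagate in time: $\partial_x^{2j}\phi$ and $\partial_x^{2j-1}\psi$ vanish at $x=0,l$ for the relevant $j$. To see this, I would read it off the Galerkin representation, since each $\phi^m$ is a sine series and each $\psi^m$ a cosine series, and the convergence in Theorem~\ref{regularity} holds in $C(I_T;H^q)$. With these conditions, the same integration by parts as in Step 2 of Lemma~\ref{qth_energy} gives $\|\phi^{(q)}\|_2^2=\sum_j c_j^{2q}(\phi,w_j)^2$, and likewise for $\psi$. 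Hence
\[
\|\phi-\Pi_N^s\phi\|_2^2=\sum_{j>N}(\phi,w_j)^2\le c_{N+1}^{-2q}\|\partial_x^q\phi\|_2^2\lesssim N^{-2q}\|\phi\|_{C(I_T;H^q)}^2,
\]
and the same bound holds for $\psi$. Theorem~\ref{regularity} then bounds these norms by the data.

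\textbf{Step 2: discrete part.} The semi-discrete solution is exactly the Galerkin approximation $\phi^m,\psi^m$ with $m=N$ from Step~1 of Theorem~\ref{thm:weak_sol}. The cleanest route therefore seems to use the Cauchy property already proven. For $m>N$, $\phi^m-\phi^N=\Phi^{m,N}$, and letting $m\to\infty$ gives $\phi-\phi^N=\lim_m\Phi^{m,N}$. We need a bound $\|\Phi^{m,N}(t)\|_2\lesssim N^{-q}(\cdots)$ uniform in $m$.

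Because the basis functions diagonalize the problem, the coefficient ODEs \eqref{ab_ode} decouple mode by mode. Consequently $\Phi^{m,N}$ and $\Psi^{m,N}$ are the Galerkin solutions with initial data $(I-\Pi_N)$ applied to the data, truncated at $m$. The first-order energy estimate \eqref{e:Et0} then gives
\[
\|\Phi^{m,N}_t\|_2^2+\|\Psi^{m,N}_x\|_2^2+\|H^{m,N}\|_2^2\lesssim E_1^{m,N}(0),
\]
which controls $\|\Phi^{m,N}\|_2$ through Poincar\'e. To gain the extra power of $N$, I would instead apply the top-order estimate of Lemma~\ref{qth_energy}: every mixed derivative of order $q$ of $(\Phi^{m,N},\Psi^{m,N})$ is bounded by $\mathbf E_q^{m,N}$. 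Since $\Phi^{m,N}(t)$ lies in the span of modes $j>N$, an inverse Poincar\'e inequality gives
\[
\|\Phi^{m,N}(t)\|_2\le c_{N+1}^{-q}\|\partial_x^q\Phi^{m,N}(t)\|_2\lesssim N^{-q}\sqrt{\mathbf E_q^{m,N}}.
\]
The same holds for $\Psi^{m,N}$ with cosines. Finally, $\mathbf E_q^{m,N}\lesssim \mathbf E_q^m$, because the initial values are just tails of the coefficient sequences. Step~4 of Lemma~\ref{qth_energy} bounds $\mathbf E_q^m$ by the squared data norm, and we let $m\to\infty$.

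With this route Step 1 is actually subsumed: $\phi-\phi^N$ is itself a pure high-mode function, so no separate projection step is needed. The main obstacle is justifying that $\Phi^{m,N}$ stays supported on modes $j>N$. This needs the decoupling: mode $i$ of $\eqref{gk_Phsi}$ involves only $a_i$ and $b_i$, which is true because $w_{i,x}=c_iv_i$ and $v_{i,x}=-c_iw_i$. One must also check that the bound from Lemma~\ref{qth_energy} is uniform in $m$ and $N$, with constants depending only on $T$, $g$ and the coefficients. Both points follow from the explicit structure of the ODE system.
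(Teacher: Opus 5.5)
Your proposal is correct and is essentially the paper's argument. The paper identifies $\phi^N$ as the sine truncation of $\phi$, i.e.\ $a_j(t)=(\phi(\cdot,t),w_j)$; this rests on exactly the mode-by-mode decoupling you make explicit, which the paper leaves implicit. It then bounds the tail $\sum_{j>N}|a_j|^2\le c_N^{-2q}\sum_j c_j^{2q}|a_j|^2$ via \eqref{eng_equ}, which is your inverse-Poincar\'e step. Your route through $\Phi^{m,N}$ and \eqref{eng_diff_equ}, taking $m\to\infty$ at the end, is only a cosmetic variation. Your Step~1 on propagated compatibility conditions is redundant, as you already note.
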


\begin{proof}
	Since $\phi^N$ and $\psi^N$ are the Fourier--Galerkin truncations of the strong solution $(\phi,\psi)$, their coefficients are given by
	$a_j(t)=(\phi(\cdot,t),w_j)$ and $b_j(t)=(\psi(\cdot,t),v_j)$. Using \eqref{eng_equ} together with the stability estimate for the energy, we obtain
	\begin{align*}
		\| \phi - \phi^N \|^2_2 &= \sum_{j=N+1}^{\infty} |a_j|_2^2 
		= \sum_{j=N+1}^{\infty} \frac{c^{2q}_j}{c^{2q}_j} |a_j|_2^2 
		\leq \frac{1}{c^{2q}_N} \sum_{j=N+1}^{\infty} c^{2q}_j |a_j|_2^2 \\
		&\lesssim N^{-2q}  [\| \phi_0 \|_{H^q}+ \| \phi_1 \|_{H^{q-1}}+\| \psi_0 \|_{H^q}+ \| \psi_1 \|_{H^{q-1}}]^2,   
	\end{align*}
	and similarly,
	\begin{align*}
		\| \psi - \psi^N \|^2_2 
		\lesssim N^{-2q} [ \| \phi_0 \|_{H^q}+ \| \phi_1 \|_{H^{q-1}}+\| \psi_0 \|_{H^q}+ \| \psi_1 \|_{H^{q-1}}]^2,   
	\end{align*}
	which yield the desired estimate.
\end{proof}

To discretize the coupled second-order integro-differential system \eqref{gkIC}-\eqref{ab_ode} in time, we employ the centered difference scheme for the second-order derivatives and the composite trapezoidal rule for the convolution integrals, thereby achieving second-order accuracy for both the differential and memory terms.

Let the time interval $I_T$ be uniformly partitioned with time step $\tau>0$, and denote the grid points by $t_n=n\tau$, where $n=0,\dots,M$ and $M\tau=T$.

Let $a_i^n$ and $b_i^n$ denote the numerical approximations to $a_i(t_n)$ and $b_i(t_n)$, respectively. 
Let $g_n:=g(t_n)$ denote the values of the kernel at the grid points. Using the centered difference approximation for the second derivatives, one has
    \begin{equation}\label{diff_approx}
        a_i^{\prime \prime}(t_n) \approx \frac{a_i^{n+1} - 2a_i^n + a_i^{n-1}}{\tau^2}, \ b_i^{\prime \prime}(t_n) \approx \frac{b_i^{n+1} - 2b_i^n + b_i^{n-1}}{\tau^2},\ n=1,\dots,M-1.
    \end{equation}
For \(i=1,\dots,N\), denote the convolution term by $\mathcal{I}[d_i](t):=\int_0^t g(t-s)d_i(s)\,\mathrm{d}s$. At time \(t_n\), we approximate this integral by the composite trapezoidal rule:
\begin{equation}\label{int_approx}
\mathcal{I}[d_i](t_n)\approx \mathcal{Q}_n[d_i]
:=\tau\Big(\frac12 g_n d_i^0+\sum_{j=1}^{n-1} g_{n-j}d_i^j+\frac12 g_0 d_i^n\Big).
\end{equation}
where \(d_i^j:=c_i a_i^j+b_i^j\), \(j=0,\dots,M\).

Substituting \eqref{diff_approx} and \eqref{int_approx} into \eqref{ab_ode}, and evaluating at $t=t_n$, 
one obtains the fully discrete scheme. For the first component $a_i$, 
by the linearity of $\mathcal{Q}_n$ we have
    \begin{align}\label{scheme_a}
        \frac{a_i^{n+1} - 2a_i^n + a_i^{n-1}}{\tau^2} & = -\kappa_1 c_i d^n_i + \kappa_1 c_i \mathcal{Q}_n[d_i] \nonumber \\
        &= -\kappa_1 c_i^2 a_i^n - \kappa_1 c_i b_i^n + \kappa_1 c_i^2 \mathcal{Q}_n[a_i] + \kappa_1 c_i \mathcal{Q}_n[b_i],
    \end{align}
i.e.,
    \begin{equation*}
        a_i^{n+1} = 2a_i^n - a_i^{n-1} + \tau^2 \left[ -\kappa_1 c_i \left( c_i a_i^n + b_i^n \right) + \kappa_1 c_i \left( c_i \mathcal{Q}_n[a_i] + \mathcal{Q}_n[b_i] \right) \right].
    \end{equation*}
Similarly, for the second component $b_i$, one gets
    \begin{equation}\label{scheme_b}
        \frac{b_i^{n+1} - 2b_i^n + b_i^{n-1}}{\tau^2} = - \bar{b} c_i^2 b_i^n -\kappa_2 d_i^n + \kappa_2 \mathcal{Q}_n[d_i],
    \end{equation}
i.e.,
    \begin{equation*}
        b_i^{n+1} = 2b_i^n - b_i^{n-1} + \tau^2 \left[ -\kappa_2 c_i a_i^n - (\bar{b} c_i^2 + \kappa_2) b_i^n + \kappa_2 \left( c_i \mathcal{Q}_n[a_i] + \mathcal{Q}_n[b_i] \right) \right].
    \end{equation*}
The schemes \eqref{scheme_a} and \eqref{scheme_b} define the time-stepping recursion for \(n\ge 1\).

Since the centered difference scheme is a three-level method, computing $a_i^1$ and $b_i^1$ requires the values at $n=0$ and $n=-1$, but the numerical values at $t_{-1}$ are not defined. We therefore set
    \begin{equation}\label{taylor_start}
        \begin{aligned}
            a_i^1 := a_i(0) + \tau a_i^{\prime}(0) + \frac{\tau^2}{2} a_i^{\prime \prime}(0),\quad
            b_i^1 := b_i(0) + \tau b_i^{\prime}(0) + \frac{\tau^2}{2} b_i^{\prime \prime}(0).
        \end{aligned}
    \end{equation}
Here $a_i(0)$, $b_i(0)$, $a_i^{\prime}(0)$, and $b_i^{\prime}(0)$ are directly determined by the initial conditions \eqref{gkIC}. The initial second derivatives $a_i^{\prime \prime}(0)$ and $b_i^{\prime \prime}(0)$ are obtained by evaluating the original system \eqref{ab_ode} at $t=0$:
    \begin{equation}\label{initial_acc}
        \left\{\begin{aligned}
           a_{i}^{\prime \prime}(0) &= -\kappa_1 c_i^2 a_{i}(0) - \kappa_1 c_i b_{i}(0), \\
           b_{i}^{\prime \prime}(0) &= -\kappa_2 c_i a_{i}(0) - \bar{b} c_i^ 2 b_{i}(0) - \kappa_2 b_{i}(0).
        \end{aligned}\right.
    \end{equation}
Substituting \eqref{initial_acc} into \eqref{taylor_start}, we can compute $a_i^1$ and $b_i^1$. After obtaining $a_j^n$ and $b_j^n$, the fully discrete approximation is defined by
\begin{align}\label{space_time_ns}
	\phi^{N,\tau}(x,t_n)=\sum_{j=1}^N a^n_{j} w_j(x),\quad \psi^{N,\tau}(x,t_n)=\sum_{j=1}^N b^n_{j} v_j(x),\ n=0,\dots,M.  
\end{align}

In the same spirit as the well-posedness and regularity analysis, the convergence analysis is based on a discrete energy method. We first derive the error equation and then estimate a suitably-defined discrete error energy.

Evaluating \eqref{ab_ode} at $t=t_n$ gives
    \begin{align}
      &  a_i''(t_n)=-\kappa_1 c_i\,d_i(t_n)+\kappa_1 c_i\,\mathcal I[d_i](t_n),\label{exact_a_n}\\
      &  b_i''(t_n)=-\bar b\,c_i^2 b_i(t_n)-\kappa_2d_i(t_n)+\kappa_2\mathcal I[d_i](t_n).\label{exact_b_n}
    \end{align}
Set $\mathcal T[f](t_n)
:=
\tau\left[\frac12 g_n f(t_0)+\sum_{j=1}^{n-1}g_{n-j} f(t_j) +\frac12 g_0 f(t_n)\right]$
for any suitable function~$f$. 
Set $r_{\text{int,} i}^n :=  \mathcal{T}[d_i](t_n) - \mathcal{I}[d_i](t_n)$ and
\begin{align*}  
r_{\text{diff,} a_i}^n :=& \frac{a_i(t_{n+1}) - 2a_i(t_n) + a_i(t_{n-1})}{\tau^2} - a_i''(t_n), \\
r_{\text{diff,} b_i}^n :=& \frac{b_i(t_{n+1}) - 2b_i(t_n) + b_i(t_{n-1})}{\tau^2} - b_i''(t_n).
\end{align*}  
Then the local truncation errors associated with $a_i$ and $b_i$ are
    \begin{align}\label{Ra_def}
	 R_{a,i}^n := r_{\text{diff,} a_i}^n 
	- \kappa_1 c_i r_{\text{int,} i}^n,\quad
	  R_{b,i}^n = r_{\text{diff,} b_i}^n 
	- \kappa_2 r_{\text{int,} i}^n.
\end{align}    
Substituting \eqref{exact_a_n} and \eqref{exact_b_n} into \eqref{Ra_def},  one gets
    \begin{align}
     &   \frac{a_i(t_{n+1})-2a_i(t_n)+a_i(t_{n-1})}{\tau^2} + \kappa_1 c_i\,d_i(t_n)
        =
        \kappa_1 c_i\,\mathcal T[d_i]+R_{a,i}^n,\label{exact_insert_a}\\
      &  \frac{b_i(t_{n+1})-2b_i(t_n)+b_i(t_{n-1})}{\tau^2} + \bar b\,c_i^2 b_i(t_n) + \kappa_2d_i(t_n) =
        \kappa_2\mathcal T[d_i]+R_{b,i}^n.\label{exact_insert_b}
    \end{align}
Subtract \eqref{scheme_a} and \eqref{scheme_b} from \eqref{exact_insert_a} and \eqref{exact_insert_b} respectively, then multiply the resulting equations by $\rho_1$ and $\rho_2$;
one arrives at the error system
    \begin{equation}\label{discrete_energy}
        \left\{\begin{aligned}
          &  \rho_1 \frac{e_{a,i}^{n+1} - 2e_{a,i}^n + e_{a,i}^{n-1}}{\tau^2} + \kappa c_i e_{d,i}^n = \kappa c_i (\mathcal{T}[d_i](t_n)-\mathcal{Q}_n[d_i]) + \rho_1 R_{a,i}^n, \\
           & \rho_2 \frac{e_{b,i}^{n+1} - 2e_{b,i}^n + e_{b,i}^{n-1}}{\tau^2} + b c_i^2 e_{b,i}^n + \kappa e_{d,i}^n = \kappa (\mathcal{T}[d_i](t_n)-\mathcal{Q}_n[d_i]) + \rho_2 R_{b,i}^n.
        \end{aligned}\right.
    \end{equation}
where
$e_{a,i}^n:=a_i(t_n)-a_{i}^n$,
$e_{b,i}^n:=b_i(t_n)-b_{i}^n$,
and
$e_{d,i}^n := c_i e_{a,i}^n + e_{b,i}^n$.

We first estimate the truncation errors $R^n_{a,i}$ and $R^n_{b,i}$ in \eqref{Ra_def}, 
each of which consists of two parts: the error of the centered second-order difference approximation 
and the error of the composite trapezoidal rule. The next lemma gives a convenient integral representation 
of the quadrature error for the composite trapezoidal rule.

\begin{lemma}\label{lem:trap_integral_remainder_detail} (see \cite[Section 5.1]{Atkinson89})
Let $n\in\mathbb N$ and $f\in C^2([0,t_n];\mathbb{R})$.
Define the quadrature error by
    \[
        E_n[f]:=\sum_{j=0}^{n-1}\frac{\tau}{2}\big(f(t_j)+f(t_{j+1})\big) -\int_0^{t_n}f(s) \, \mathrm{d}s.
    \]
Then there exists a kernel $P_{n,\tau}$ such that $E_n[f]=\int_0^{t_n}P_{n,\tau}(s)\,f''(s)\,\mathrm{d}s,$
and
$|P_{n,\tau}(s)|\le \frac{\tau^2}{8}$
for all $s\in[0,t_n]$.  
\end{lemma}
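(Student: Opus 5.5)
The plan is to reduce the composite rule to a sum of single-panel trapezoidal errors and to represent each panel error by the Peano kernel theorem. Since both the quadrature and the integral are additive over panels, we can write
\[
E_n[f]=\sum_{j=0}^{n-1}e_j[f],\qquad e_j[f]:=\frac{\tau}{2}\big(f(t_j)+f(t_{j+1})\big)-\int_{t_j}^{t_{j+1}}f(s)\,\mathrm{d}s .
\]
The proof then needs an exact formula for each $e_j[f]$ in terms of $f''$ on $[t_j,t_{j+1}]$. After that, we glue the local kernels together into a single kernel $P_{n,\tau}$.

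On a fixed panel $[t_j,t_{j+1}]$, define $k_j(s):=\frac12(s-t_j)(t_{j+1}-s)\ge 0$. This is the quadratic that vanishes at both endpoints and satisfies $k_j''=-1$. Integrating by parts twice gives
\[
\int_{t_j}^{t_{j+1}}k_j(s)f''(s)\,\mathrm{d}s=\big[k_jf'\big]_{t_j}^{t_{j+1}}-\big[k_j'f\big]_{t_j}^{t_{j+1}}+\int_{t_j}^{t_{j+1}}k_j''(s)f(s)\,\mathrm{d}s .
\]
The first bracket vanishes because $k_j(t_j)=k_j(t_{j+1})=0$. From $k_j'(t_j)=\tau/2$ and $k_j'(t_{j+1})=-\tau/2$, the second bracket equals $-\frac{\tau}{2}\big(f(t_j)+f(t_{j+1})\big)$, so its negative contributes $+\frac{\tau}{2}\big(f(t_j)+f(t_{j+1})\big)$. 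The last term is $-\int_{t_j}^{t_{j+1}}f$. Adding these, the identity is exactly $e_j[f]=\int_{t_j}^{t_{j+1}}k_j(s)f''(s)\,\mathrm{d}s$. The assumption $f\in C^2$ is precisely what justifies the two integrations by parts. It will be worth rechecking this sign bookkeeping carefully, since it fixes the sign convention of the kernel: with $E_n$ defined as quadrature minus integral, the kernel comes out nonnegative.

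Next we set $P_{n,\tau}(s):=k_j(s)$ for $s\in[t_j,t_{j+1}]$. This is well defined and continuous, because adjacent pieces both vanish at the shared nodes. Summing over $j$ gives $E_n[f]=\int_0^{t_n}P_{n,\tau}(s)f''(s)\,\mathrm{d}s$.

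For the bound, on each panel the product $(s-t_j)(t_{j+1}-s)$ is maximized at the midpoint, where it equals $\tau^2/4$. Hence $0\le P_{n,\tau}(s)\le \tau^2/8$ for all $s\in[0,t_n]$, which is the claimed estimate.

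No step is a real obstacle; the only delicate point is the sign and constant check in the integration by parts. As a consistency test, taking $f(s)=s^2$ on a single panel should give $e_j=\tau^3/6=\int_{t_j}^{t_{j+1}} 2k_j(s)\,\mathrm{d}s$, since $\int_{t_j}^{t_{j+1}} k_j=\tau^3/12$. This confirms both the sign and the constant.
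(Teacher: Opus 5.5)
Your proof is correct and follows the standard Peano-kernel argument that the paper relies on through its citation of Atkinson, Section 5.1; the paper gives no proof of its own. On each panel you take the kernel $P_{n,\tau}(s)=\frac12(s-t_j)(t_{j+1}-s)$ obtained by two integrations by parts, your signs are right for the paper's convention (quadrature minus integral) so that $P_{n,\tau}\ge 0$, and the midpoint maximum gives $0\le P_{n,\tau}\le \tau^2/8$.
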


The next lemma provides the truncation error estimates needed in the convergence analysis. The key step is to write the differential and quadrature remainders in integral form and then combine these representations with the higher-order energy estimates established in Lemma~\ref{qth_energy}.

\begin{lemma}\label{R_esti}
Assume that
$(\phi_0,\phi_1,\psi_0,\psi_1)\in(H^4\cap H_0^1)\times (H^{3}\cap H_0^1)\times(H^4\cap H_*^1)\times (H^{3}\cap H_*^1)$, $g\in C^{3}(I_T)$ and that the fourth-order compatibility conditions are satisfied, namely, \eqref{compatibility_even}--\eqref{compatibility_even_1} with $r=2$.
Then the local truncation errors $R_{a,i}^n$ and $R_{b,i}^n$ satisfy
    \begin{equation*}
        \tau\sum_{n=1}^{M-1}\sum_{i=1}^N |R_{a,i}^n|^2 \lesssim \tau^4,\quad \tau\sum_{n=1}^{M-1}\sum_{i=1}^N |R_{b,i}^n|^2 \lesssim \tau^4.
    \end{equation*}
\end{lemma}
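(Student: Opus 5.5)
The plan is to estimate the two parts of $R_{a,i}^n$ separately, using integral remainders, and then to sum over $i$ with Parseval's identity so that Lemma~\ref{qth_energy} (or Theorem~\ref{regularity}) with $q=4$ controls everything.

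\textbf{Step 1: the centered-difference remainder.} Taylor's formula with integral remainder gives
\[
r_{\mathrm{diff},a_i}^n=\frac{1}{6\tau^2}\int_{t_{n-1}}^{t_{n+1}}(\tau-|t-t_n|)^3 a_i^{(4)}(t)\,\mathrm{d}t .
\]
By Cauchy--Schwarz this yields $|r_{\mathrm{diff},a_i}^n|^2\lesssim \tau^3\int_{t_{n-1}}^{t_{n+1}}|a_i^{(4)}(t)|^2\,\mathrm{d}t$. Summing over $i\le N$ and using Parseval, $\sum_i|a_i^{(4)}(t)|^2\le\|\partial_t^4\phi(t)\|_2^2$. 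Summing over $n$, each subinterval is counted at most twice, so
\[
\tau\sum_n\sum_i|r_{\mathrm{diff},a_i}^n|^2\lesssim\tau^4\int_0^T\|\partial_t^4\phi\|_2^2\,\mathrm{d}t\lesssim\tau^4 .
\]
The last bound comes from Theorem~\ref{regularity} with $q=4$, which gives $\phi\in C^4(I_T;L^2)$. The same argument applies to $b_i$ through $\partial_t^4\psi$. Here one should note that $a_i(t)=(\phi(t),w_i)$ for the exact solution, or, if $a_i$ denotes the Galerkin coefficient, that the bounds \eqref{eng_equ} are uniform in $m$; either reading gives the same estimate.

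\textbf{Step 2: the trapezoidal remainder.} Write $r_{\mathrm{int},i}^n=E_n[f_{n,i}]$ with $f_{n,i}(s)=g(t_n-s)d_i(s)$. Note that $\mathcal T$ uses the nodal values $g_{n-j}d_i(t_j)$, so it coincides with the composite trapezoidal rule applied to $f_{n,i}$. By Lemma~\ref{lem:trap_integral_remainder_detail},
\[
|r_{\mathrm{int},i}^n|\le\frac{\tau^2}{8}\int_0^{t_n}\Big|\partial_s^2\big(g(t_n-s)d_i(s)\big)\Big|\,\mathrm{d}s .
\]
The product rule gives terms $|g''||d_i|+2|g'||d_i'|+|g||d_i''|$, and $g\in C^2$ is bounded on $I_T$. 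Hence $|r_{\mathrm{int},i}^n|^2\lesssim\tau^4\int_0^T\sum_{k=0}^2|d_i^{(k)}|^2\,\mathrm{d}s$.

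\textbf{Step 3: the weight $c_i$ (main obstacle).} For $R_{a,i}^n$ the quadrature error is multiplied by $c_i$, so one needs $\sum_i c_i^2|d_i^{(k)}|^2$ rather than $\sum_i |d_i^{(k)}|^2$. I would identify this with $\|\partial_t^k\eta_x\|_2^2$: since $c_i a_i$ is the cosine coefficient of $\phi_x$, we have $d_i=(\eta,v_i)$, and $c_id_i$ is the sine coefficient of $\eta_x$, up to sign. This gives
\[
\sum_i c_i^2|d_i^{(k)}|^2\le\|\partial_t^k\eta_x\|_2^2\lesssim\|\partial_t^k\phi_{xx}\|_2^2+\|\partial_t^k\psi_x\|_2^2 ,
\]
which is a mixed derivative of total order at most $4$ for $k\le2$ and is bounded by Lemma~\ref{qth_energy}/Theorem~\ref{regularity}. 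The point that needs checking is that the coefficient identity for $\eta_x$ holds without boundary terms, i.e.\ that $\eta$ is expanded in cosines and its derivative in sines. This is an integration by parts that needs no vanishing of $\eta$ at the endpoints, so it always holds. For $R_{b,i}^n$ no weight appears and $\sum_i|d_i^{(k)}|^2\le\|\partial_t^k\eta\|_2^2$.

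\textbf{Step 4: conclusion.} Combine the pieces with $(x+y)^2\le 2x^2+2y^2$ and sum over $n\le M-1$. Since $\tau(M-1)\le T$, this gives $\tau\sum_n\sum_i|R_{a,i}^n|^2\lesssim\tau^4$ and the same for $R_{b,i}^n$. The implied constant depends on the $H^4\times H^3$ norms of the data, $T$, and $\|g\|_{C^2}$.
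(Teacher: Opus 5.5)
Your proposal is correct and follows essentially the same route as the paper. It uses Taylor's formula with integral remainder plus Cauchy--Schwarz for the centered difference, giving the same $\tau^3\int_{t_{n-1}}^{t_{n+1}}|a_i^{(4)}|^2$ bound. It then applies the Peano-kernel trapezoidal estimate (Lemma~\ref{lem:trap_integral_remainder_detail}) with the product rule for $g(t_n-s)d_i(s)$, identifies $c_i d_i^{(k)}$ with the sine coefficients of $\partial_t^k\eta_x$ to absorb the weight $c_i$, and controls everything by the order-$4$ energy bounds. The only cosmetic difference is that the paper works with $\phi^N$ and Lemma~\ref{qth_energy}, while you use the exact solution and Theorem~\ref{regularity}; as you note, these coincide because the sine/cosine modes decouple, so $a_i(t)=(\phi(\cdot,t),w_i)$.
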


\begin{proof}
	By the definitions of \(R_{a,i}^n\) and \(R_{b,i}^n\), 
	\begin{align*}
	&	|R_{a,i}^n|^2
		=
		|r_{\text{diff,}a_i}^n-\kappa_1 c_i r_{\text{int,}i}^n|^2
		\le
		2|r_{\text{diff,}a_i}^n|^2+2\kappa_1^2 c_i^2 |r_{\text{int,}i}^n|^2,\\
	&	|R_{b,i}^n|^2
		=
		|r_{\text{diff,}b_i}^n-\kappa_2 r_{\text{int,}i}^n|^2
		\le
		2|r_{\text{diff,}b_i}^n|^2+2\kappa_2^2 |r_{\text{int,}i}^n|^2.
	\end{align*}
	The two terms on the right-hand sides will be estimated separately.

	We first derive an integral representation for \(r_{\text{diff,}a_i}^n\). Expanding
	\(a_i(t_{n+1})\) and \(a_i(t_{n-1})\) about \(t_n\) by Taylor's formula with an integral remainder,
	 then adding the resulting identities, one obtains
	\begin{equation*}
		r_{\text{diff,}a_i}^n
		=
		\frac{1}{6\tau^2}\left[
		\int_{t_n}^{t_n+\tau}(t_n+\tau-s)^3 a_i^{(4)}(s)\,\mathrm{d}s
		+
		\int_{t_n-\tau}^{t_n}(s-(t_n-\tau))^3 a_i^{(4)}(s)\,\mathrm{d}s
		\right].
	\end{equation*}
	Hence a Cauchy-Schwarz inequality yields
	\[
	|r_{\text{diff,}a_i}^n|^2
	\le
	\frac{\tau^3}{126}
	\int_{t_{n-1}}^{t_{n+1}}|a_i^{(4)}(s)|^2\,\mathrm{d}s.
	\]
	Note that $a_i^{(4)}(t)
	=
	\frac{\mathrm{d}^4}{\mathrm{d}t^4}(\phi^N(\cdot,t),w_i)
	=
	(\partial_t^4\phi^N(\cdot,t),w_i)$. From Lemma~\ref{qth_energy} one has
	\begin{equation}\label{est_diff_suma_appB}
		\begin{aligned}
			&\tau\sum_{n=1}^{M-1}\sum_{i=1}^N |r_{\text{diff,}a_i}^n|^2
			\le
			\frac{\tau^4}{126}
			\sum_{i=1}^N\sum_{n=1}^{M-1}
			\int_{t_{n-1}}^{t_{n+1}}|a_i^{(4)}(s)|^2\,\mathrm{d}s 
			\le
			C\tau^4
			\sum_{i=1}^N\int_0^T |a_i^{(4)}(s)|^2\,\mathrm{d}s \\
			&\le
			CT\tau^4 \max_{t\in[0,T]}\|\partial_t^4\phi^N(\cdot,t)\|_2^2 
			\le
			C\tau^4
			\left[
			\|\phi_0\|_{H^4}+\|\psi_0\|_{H^4}
			+\|\phi_1\|_{H^3}+\|\psi_1\|_{H^3}
			\right]^2.
		\end{aligned}
	\end{equation}
	By the same argument,
	\begin{equation}\label{est_diff_sumb_appB}
		\tau\sum_{n=1}^{M-1}\sum_{i=1}^N |r_{\text{diff,}b_i}^n|^2
		\le
		C\tau^4
		\left[
		\|\phi_0\|_{H^4}+\|\psi_0\|_{H^4}
		+\|\phi_1\|_{H^3}+\|\psi_1\|_{H^3}
		\right]^2.
	\end{equation}

Next, we estimate the quadrature truncation error.	
	In Lemma~\ref{lem:trap_integral_remainder_detail}, take $f(s)=g(t_n-s)d_i(s)$ for $s\in[0,t_n]$, then
	\[
	|r_{\text{int,}i}^n|
	\le
	\frac{\tau^2}{4}\|g\|_{C^2([0,T];\mathbb{R})}
	\int_0^T\left[|d_i(s)|+|d_i'(s)|+|d_i''(s)|\right]\,\mathrm{d}s.
	\]
	Consequently a Cauchy--Schwarz inequality gives
	\begin{equation}\label{sum_int_expansion_appB}
		\sum_{i=1}^N c_i^2 |r_{\text{int,}i}^n|^2
		\le
		C\tau^4
		\sum_{i=1}^N
		\int_0^T
		\left[
		|c_i d_i(s)|^2+|c_i d_i'(s)|^2+|c_i d_i''(s)|^2
		\right]\mathrm{d}s.
	\end{equation}
	Observe that $\partial_x\eta^N(x,s)=-\sum_{j=1}^N c_j d_j(s)w_j(x)$, 
	then by Lemma~\ref{qth_energy} one has
	\[
	\|\partial_x\eta^N(\cdot,s)\|_2^2
	=
	\sum_{i=1}^N |c_i d_i(s)|^2
	\lesssim
	\mathbf E_2^N.
	\]
	One can show similarly that $\sum_{i=1}^N |c_i d_i'(s)|^2 \lesssim \mathbf E_3^N,\ \sum_{i=1}^N |c_i d_i''(s)|^2 \lesssim \mathbf E_4^N$. Substituting these estimates into \eqref{sum_int_expansion_appB}, we obtain
	\begin{equation*}
		\sum_{i=1}^N c_i^2 |r_{\text{int,}i}^n|^2
		\le
		C\tau^4 \mathbf E_4^N
		\le
		C\tau^4
		\left(
		\|\phi_0\|_{H^4}+\|\psi_0\|_{H^4}
		+\|\phi_1\|_{H^3}+\|\psi_1\|_{H^3}
		\right)^2
		\le
		C\tau^4,
	\end{equation*}
which together with \eqref{est_diff_suma_appB} and \eqref{est_diff_sumb_appB} 
yields the bounds for \(R_{a,i}^n\) and \(R_{b,i}^n\).
\end{proof}

We next analyze the convergence of the fully discrete scheme.

\begin{theorem}\label{thm:full_convergence}
Let \(q\in\mathbb{N}\) with \(q\geq 4\). Assume that the hypotheses of Theorem~\ref{regularity} hold and the mesh sizes satisfy $\tau=\mathcal O(N^{-1})$. Then the Timoshenko system \eqref{Timoshenko}--\eqref{IC} admits a unique strong solution \((\phi,\psi)\), and the fully discrete solution \eqref{space_time_ns} satisfies
    \begin{equation*}
        \begin{aligned}
            \| \phi - \phi^{N,\tau} \|_{L^\infty_\tau(L^2)}:=
            \max_{0\le n\le M}\left\|\phi(\cdot,t_n)-\phi^{N,\tau}(\cdot,t_n)\right\|_{2} &\lesssim \left( N^{-q} + \tau^2 \right), \\
            \| \psi - \psi^{N,\tau} \|_{L^\infty_\tau(L^2)}:=
            \max_{0\le n\le M}\left\|\psi(\cdot,t_n)-\psi^{N,\tau}(\cdot,t_n)\right\|_{2} &\lesssim \left( N^{-q} + \tau^2 \right).
        \end{aligned}
    \end{equation*}
\end{theorem}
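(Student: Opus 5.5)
The proof splits the error as $\phi-\phi^{N,\tau}=(\phi-\phi^N)+(\phi^N-\phi^{N,\tau})$, and similarly for $\psi$. Here $\phi^N,\psi^N$ are the Fourier truncations with coefficients $a_i(t)=(\phi(\cdot,t),w_i)$ and $b_i(t)=(\psi(\cdot,t),v_i)$. Because the sine/cosine bases diagonalize the spatial operators, these coefficients satisfy \eqref{ab_ode} exactly. Existence, uniqueness and the regularity of the strong solution come from Theorem~\ref{regularity}, and the spatial part is bounded by $N^{-q}$ via Theorem~\ref{space_esti}.

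It therefore remains to show
\[
\max_n\sum_{i=1}^N\bigl(|e_{a,i}^n|^2+|e_{b,i}^n|^2\bigr)\lesssim\tau^4,
\]
by Parseval. For the starting step, $e^0=0$ and $e^1=O(\tau^3)$ coefficientwise from the Taylor start \eqref{taylor_start}. The $\ell^2$-sum of these errors is controlled by $\|\partial_t^3\phi^N\|_2$ and $\|\partial_t^3\psi^N\|_2$, which Lemma~\ref{qth_energy} bounds.

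\textbf{Discrete energy.} Multiply the first equation of \eqref{discrete_energy} by $(e_{a,i}^{n+1}-e_{a,i}^{n-1})$ and the second by $(e_{b,i}^{n+1}-e_{b,i}^{n-1})$, then sum over $i$. Use
\[
(u^{n+1}-2u^n+u^{n-1})(u^{n+1}-u^{n-1})=|u^{n+1}-u^n|^2-|u^n-u^{n-1}|^2,
\]
and for the symmetric stiffness terms $u^n(u^{n+1}-u^{n-1})=u^nu^{n+1}-u^{n-1}u^n$. Define
\[
\mathcal E^{n+\frac12}=\sum_i\Bigl[\rho_1\Bigl|\tfrac{e_{a,i}^{n+1}-e_{a,i}^n}{\tau}\Bigr|^2+\rho_2\Bigl|\tfrac{e_{b,i}^{n+1}-e_{b,i}^n}{\tau}\Bigr|^2+bc_i^2e_{b,i}^{n+1}e_{b,i}^n+\kappa e_{d,i}^{n+1}e_{d,i}^n\Bigr].
\]
The stiffness terms combine because $c_ie_{a,i}+e_{b,i}=e_{d,i}$, giving the telescoping relation
\[
\mathcal E^{n+\frac12}-\mathcal E^{n-\frac12}=\text{RHS}\cdot(e^{n+1}-e^{n-1}).
\]

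\textbf{Positivity.} Write $e^{n+1}e^n=\frac14|e^{n+1}+e^n|^2-\frac14|e^{n+1}-e^n|^2$. The negative part is at most
\[
\frac14\tau^2\sum_i\bigl(bc_i^2+2\kappa c_i^2+2\kappa\bigr)\Bigl|\frac{\delta e}{\tau}\Bigr|^2.
\]
Since $c_i\le N\pi/l$, a CFL-type condition $\tau N\le c_0$ with $c_0$ small makes this absorbable into the kinetic terms. This is where the mesh condition $\tau=\mathcal O(N^{-1})$ is used: with $c_0$ small,
\[
\mathcal E^{n+\frac12}\gtrsim\sum_i\Bigl(\Bigl|\frac{\delta e_a}{\tau}\Bigr|^2+\Bigl|\frac{\delta e_b}{\tau}\Bigr|^2+c_i^2\Bigl|\frac{e_b^{n+1}+e_b^n}{2}\Bigr|^2+\Bigl|\frac{e_d^{n+1}+e_d^n}{2}\Bigr|^2\Bigr).
\]
The $e_a$ amplitude, and hence the $L^2$ error, is then recovered from $\delta e_a/\tau$ by summing in time, since $\|e^n\|\le\sum_k\|\delta e^k\|$.

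\textbf{Main obstacle: the memory error term.} The right-hand side contains
\[
\mathcal T[d_i](t_n)-\mathcal Q_n[d_i]=\tau\sum_j\omega_{nj}g_{n-j}e_{d,i}^j,
\]
paired with $\kappa c_i(e_{a,i}^{n+1}-e_{a,i}^{n-1})$ and $\kappa(e_{b,i}^{n+1}-e_{b,i}^{n-1})$. The second pairing is harmless, but the first carries a factor $c_i$ on the velocity differences, which the energy does not directly control. I would first try grouping the two pairings:
\[
c_i\,\delta e_{a,i}+\delta e_{b,i}=\delta e_{d,i},
\]
so the memory contribution is $\kappa\sum_i\mathcal M^n_i(e_{d,i}^{n+1}-e_{d,i}^{n-1})$ with $\mathcal M^n_i=\tau\sum_j\omega_{nj}g_{n-j}e_{d,i}^j$. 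Summing over $n$ and applying discrete summation by parts in $n$ moves the difference onto $\mathcal M$. The resulting terms are $\kappa\,\mathcal M^N_ie_{d}^{N\pm}$, bounded by Young's inequality with $\epsilon\|e_d^N\|^2$ absorbed, plus $\sum_n(\mathcal M^{n+1}-\mathcal M^{n-1})e_d^n$. Since $g\in C^1$, the increment satisfies $|\mathcal M^{n+1}-\mathcal M^{n-1}|\lesssim\tau\sum_j|e_d^j|\tau+\tau|e_d^{n}|$. Hence everything is bounded by $\tau\sum_k\mathcal E^{k+\frac12}$, with no factor $c_i$ left. The truncation terms $\rho_1R_{a,i}^n\,\delta e_a$ and $\rho_2R_{b,i}^n\,\delta e_b$ are handled by Young's inequality and Lemma~\ref{R_esti} (where $c_iR$ already appears inside $R_{a,i}$), contributing $\tau^4+\tau\sum\mathcal E$. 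A discrete Gronwall inequality then gives $\mathcal E^{n+\frac12}\lesssim\tau^4$, and hence the $\tau^2$ bound on the $L^2$ errors. Combined with the $N^{-q}$ spatial bound, this proves both estimates.
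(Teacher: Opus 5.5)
Your proposal is correct and follows essentially the same route as the paper. The shared steps are: the spatial/temporal error split using Theorem~\ref{space_esti}; the same discrete energy $\tfrac12[\rho_1|\delta_\tau e_{a,i}^{n+1}|^2+\rho_2|\delta_\tau e_{b,i}^{n+1}|^2+bc_i^2e_{b,i}^{n+1}e_{b,i}^n+\kappa e_{d,i}^{n+1}e_{d,i}^n]$, made positive by a small-constant CFL condition; the same grouping of the memory error into $\kappa A_n\,\delta_{2\tau}e_{d,i}^n$, followed by summation by parts into endpoint and interior terms; Young's inequality with Lemma~\ref{R_esti} for the truncation terms; the $O(\tau^4)$ starting energy from the Taylor start; and discrete Gronwall. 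The differences are minor. You bound the interior increments $A_{n+1}-A_{n-1}$ directly through the Lipschitz continuity of $g$, which is more transparent than the paper's monotonicity argument, though the increment also carries local terms $\tau|e_{d,i}^{n\pm1}|$ that you omitted. You also recover $e_{a,i}^n$ by summing difference quotients rather than from \eqref{norm_equiv}, which needs the easy extra remark that under the CFL condition your averaged lower bound still controls each individual $|e_{d,i}^n|^2$.
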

\begin{proof}
We prove the error estimate only for $\phi$ since the argument for $\psi$ is analogous. 
Split the total error into the spatial projection error $E_{I} := \| \phi(\cdot, t_n) - \phi^N(\cdot, t_n) \|_{2} $ and the temporal discretization error $E_{II}:=\left \|\phi^N(\cdot,t_n)-\phi^{N,\tau}(\cdot,t_n) \right \|_{2}$. Then
  $$
\left\|\phi(\cdot,t_n)-\phi^{N,\tau}(\cdot,t_n)\right\|_{2} \le E_{I} + E_{II}.
$$

By Theorem \ref{space_esti}, for every $t_n$ we have
    \begin{equation}\label{est_I}
        E_{I} = \| \phi(\cdot, t_n) - \phi^N(\cdot, t_n) \|_{2} \le C N^{-q}.
    \end{equation} 
Thus, we need only estimate $E_{II}$. Define the discrete energy at time level $n$ by
    \begin{equation*}
        E_i^{n} := \frac{1}{2}\left[\rho_1 |\delta_\tau e_{a,i}^{n+1}|^2 + \rho_2 |\delta_\tau e_{b,i}^{n+1}|^2 
        +  b c_i^2 e_{b,i}^{n+1} e_{b,i}^n 
        +  \kappa e_{d,i}^{n+1} e_{d,i}^n\right],
    \end{equation*}
where $\delta_\tau e_{a,i}^{n+1} :=  \frac{e_{a,i}^{n+1} - e_{a,i}^{n}}{\tau}$ and  $\delta_\tau e_{b,i}^{n+1} :=  \frac{e_{b,i}^{n+1} - e_{b,i}^{n}}{\tau}$.     
Multiplying the first equation in \eqref{discrete_energy} by
$\delta_{2\tau} e_{a,i}^n := \frac{e_{a,i}^{n+1} - e_{a,i}^{n-1}}{2\tau}$,
the second equation by
$\delta_{2\tau} e_{b,i}^n := \frac{e_{b,i}^{n+1} - e_{b,i}^{n-1}}{2\tau}$,
and then adding the two resulting identities, we obtain
\begin{equation}\label{energy_id}
        \frac{E_i^n - E_i^{n-1}}{\tau} = \kappa (\mathcal{T}[d_i](t_n)-\mathcal{Q}_n[d_i]) \delta_{2\tau} e_{d,i}^n + \rho_1 R_{a,i}^n \delta_{2\tau} e_{a,i}^n + \rho_2 R_{b,i}^n \delta_{2\tau} e_{b,i}^n.
\end{equation}

To guarantee the positivity of the discrete energy, we expand $E_i^n$ algebraically. Using the identity
$uv = \frac{1}{2}(u^2+v^2) - \frac{1}{2}(u-v)^2$,
we rewrite the cross terms as follows:
\[
        E_i^n = E_{1,i}^n 
       + \frac{1}{4} b c_i^2 \left( |e_{b,i}^{n+1}|^2 + |e_{b,i}^n|^2 \right) 
        + \frac{1}{4} \kappa \left( |e_{d,i}^{n+1}|^2 + |e_{d,i}^n|^2 \right),
    \]
where 
\begin{align*}
        E_{1,i}^n 
        := & \frac{1}{2} \left[ \rho_1 |\delta_\tau e_{a,i}^{n+1}|^2 + \rho_2 |\delta_\tau e_{b,i}^{n+1}|^2 - \frac{\tau^2}{2} b c_i^2 |\delta_\tau e_{b,i}^{n+1}|^2 - \frac{\tau^2}{2} \kappa |c_i \delta_\tau e_{a,i}^{n+1} + \delta_\tau e_{b,i}^{n+1}|^2 \right]\\
        \ge& \frac{1}{2} \left( \rho_1 - \tau^2 \kappa c_i^2 \right) |\delta_\tau e_{a,i}^{n+1}|^2 
        + \frac{1}{2} \left( \rho_2 - \frac{\tau^2}{2} b c_i^2-\tau^2 \kappa \right) |\delta_\tau e_{b,i}^{n+1}|^2.
\end{align*}
Thus, under the mesh condition \(\tau=O(N^{-1})\), there exists a constant \(C>0\) such that
\begin{equation}\label{norm_equiv}
        E_i^n \ge C \left( |\delta_\tau e_{a,i}^{n+1}|^2 + |\delta_\tau e_{b,i}^{n+1}|^2 + |e_{d,i}^n|^2 + |e_{d,i}^{n+1}|^2 + c_i^2 |e_{b,i}^n|^2 \right).
\end{equation}
In particular, the combination \( |e_{d,i}^n|^2 + c_i^2 |e_{b,i}^n|^2 \) controls \( |e_{a,i}^n|^2 \).

Multiply both sides of \eqref{energy_id} by \(\tau\) and sum over \(n=1, \dots, m\), where \(1\le m\le M-1\), 
to get
    \begin{equation}\label{eq:sum_energy_id_short}
        E_i^m-E_i^0
        =
        \kappa \mathfrak I_i^m
        +\rho_1\tau\sum_{n=1}^{m} R_{a,i}^n\delta_{2\tau}e_{a,i}^n
        +\rho_2\tau\sum_{n=1}^{m} R_{b,i}^n\delta_{2\tau}e_{b,i}^n,
    \end{equation}
where $A_n:=\mathcal T[d_i](t_n)-\mathcal Q_n[d_i]$ and $\mathfrak I_i^m:= \frac12\sum_{n=1}^{m}A_n\bigl(e_{d,i}^{n+1}-e_{d,i}^{n-1}\bigr)= \mathfrak I_{\mathrm{end}}^m+\mathfrak I_{\mathrm{mid}}^m$; 
here $\mathfrak I_{\mathrm{end}}^m = \frac12\Big(A_{m}e_{d,i}^{m+1}+A_{m-1}e_{d,i}^{m}-A_{1}e_{d,i}^{0}-A_{2}e_{d,i}^{1}\Big)$
and
$\mathfrak I_{\mathrm{mid}}^m = \frac12\sum_{r=2}^{m-1}(A_{r-1}-A_{r+1})e_{d,i}^{r}$
are the endpoint and interior contributions, respectively.

The nonnegativity of the trapezoidal weights and a Cauchy-Schwarz inequality yield
    \begin{equation}\label{eq:An_bound_short}
        |A_n|^2
        \le
        C\tau\sum_{j=0}^{n} g_{n-j}|e_{d,i}^j|^2,
        \qquad n\ge 1,
    \end{equation}
where the constant \(C>0\) depends only on \(T\) and \(g_0\). Applying Young's inequality with parameter \(\varepsilon>0\) to each term in \(\mathfrak I_{\mathrm{end}}^m\) and then using \eqref{eq:An_bound_short}, 
we obtain
    \begin{equation}\label{eq:Iend_bound_short}
        |\kappa \mathfrak I_{\mathrm{end}}^m|
        \le
        \frac{C}{\varepsilon}\tau\sum_{n=0}^{m-1}E_i^n
        +\varepsilon C\bigl(E_i^m+E_i^0\bigr).
    \end{equation}
Here we have used the fact that the discrete energy $E_i^n$ controls $|e_{d,i}^n|^2$.

Again using the identity
$uv = \frac{1}{2}(u^2+v^2) - \frac{1}{2}(u-v)^2$,
together with the nonnegativity and monotonicity of the kernel \(g\), we decompose \(\mathfrak I_{\mathrm{mid}}^m\) into several terms. The terms involving squared differences are nonpositive and may therefore be discarded. As a consequence, one gets
    \begin{equation*}
        \mathfrak I_{\mathrm{mid}}^m
        \le
        C\tau\sum_{n=0}^{m}|e_{d,i}^n|^2.
    \end{equation*}
Invoking once again the control of $|e_{d,i}^n|^2$ by the discrete energy, we  deduce that
    \begin{equation}\label{eq:Imid_energy_bound_short}
        |\kappa \mathfrak I_{\mathrm{mid}}^m|
        \le
        C\tau\sum_{n=0}^{m-1}E_i^n.
    \end{equation}

Combining \eqref{eq:Iend_bound_short} and \eqref{eq:Imid_energy_bound_short} gives
    \begin{equation}\label{eq:I_bound_short}
        |\kappa \mathfrak I_i^m|
        \le
        \frac{C}{\varepsilon}\tau\sum_{n=0}^{m-1}E_i^n
        +\varepsilon C\bigl(E_i^m+E_i^0\bigr).
    \end{equation}
By choosing \(\varepsilon>0\) sufficiently small, one can absorb the right-hand side  \(E_i^m\) term 
into the energy term on the left-hand side.

We now estimate the truncation-error terms in \eqref{eq:sum_energy_id_short}. By Young's inequality,
    \begin{equation}\label{Ra_term_bound}
        \begin{aligned}
            \rho_1\tau\sum_{n=1}^m R_{a,i}^n\delta_{2\tau}e_{a,i}^n
            &\le
            \frac{\rho_1\tau}{2}\sum_{n=1}^m |R_{a,i}^n|^2
            +\frac{\rho_1\tau}{2}\sum_{n=1}^m|\delta_{2\tau}e_{a,i}^n|^2 \\
            &\le
            \frac{\rho_1\tau}{2}\sum_{n=1}^m |R_{a,i}^n|^2
            +\frac{\rho_1\tau}{4}\sum_{n=1}^m\left(|\delta_\tau e_{a,i}^{n+1}|^2+|\delta_\tau e_{a,i}^{n}|^2\right).
        \end{aligned}
    \end{equation}
Similarly,
    \begin{equation}\label{Rb_term_bound}
        \begin{aligned}
            \rho_2\tau\sum_{n=1}^m R_{b,i}^n\delta_{2\tau}e_{b,i}^n
            \le
            \frac{\rho_2\tau}{2}\sum_{n=1}^m |R_{b,i}^n|^2
            +\frac{\rho_2\tau}{4}\sum_{n=1}^m\left(|\delta_\tau e_{b,i}^{n+1}|^2+|\delta_\tau e_{b,i}^{n}|^2\right).
        \end{aligned}
    \end{equation}
Next, we estimate the second terms on the right-hand sides of \eqref{Ra_term_bound} and \eqref{Rb_term_bound}. By \eqref{norm_equiv}, there exists a constant $C>0$ such that, for all $n \in \mathbb{N}$,
    \begin{equation}\label{velocity_sum_control}
        \begin{aligned}
            \frac{\rho_1\tau}{4}\sum_{n=1}^m\left(|\delta_\tau e_{a,i}^{n+1}|^2+|\delta_\tau e_{a,i}^{n}|^2\right)
            +
            \frac{\rho_2\tau}{4}\sum_{n=1}^m\left(|\delta_\tau e_{b,i}^{n+1}|^2+|\delta_\tau e_{b,i}^{n}|^2\right)
            \le C \tau\sum_{n=0}^{m}E_i^n.
        \end{aligned}
    \end{equation}
Combining \eqref{eq:I_bound_short}, \eqref{velocity_sum_control}, and \eqref{eq:sum_energy_id_short}, and then choosing $\varepsilon$ and $\tau$ sufficiently small so that
$1-C\varepsilon-C\tau> \frac{1}{2}$,
we arrive at
    \begin{equation}\label{Ei_recursion_sum}
        E_i^m
        \lesssim
        E_i^0
        +
        \tau\sum_{n=0}^{m-1}E_i^n
        +
        \tau\sum_{n=1}^{m}\left(|R_{a,i}^n|^2+|R_{b,i}^n|^2\right).
\end{equation} 

Define the total energy by
$\mathbb{E}^n:=\sum_{i=1}^N E_i^n$, and sum \eqref{Ei_recursion_sum} over $i=1,\dots,N$.
One then has 
    \begin{equation*}
        \mathbb{E}^m
        \lesssim
        \mathbb{E}^0
        +
        \tau\sum_{n=0}^{m-1}\mathbb{E}^n
        +
        \tau\sum_{n=1}^{m}\sum_{i=1}^N\left(|R_{a,i}^n|^2+|R_{b,i}^n|^2\right).
    \end{equation*}
The truncation-error estimate of Lemma \ref{R_esti} gives
    \begin{equation}\label{gronwall_ready2}
        \mathbb{E}^m
        \lesssim
        \mathbb{E}^0
        +
        \tau\sum_{n=0}^{m-1}\mathbb{E}^n
        +
        \tau^4.
    \end{equation}
Applying a discrete Gronwall's inequality \cite[Corollary 2.1.19]{Bru04} to \eqref{gronwall_ready2}, we obtain
    \begin{equation}\label{E_bound_final}
        \max_{0\le m\le M-1}\mathbb{E}^m
        \lesssim
        \left(\mathbb{E}^0+\tau^4\right).
    \end{equation}

We next estimate $\mathbb{E}^0$. Since the numerical initial values are taken as the exact initial values,
$a_{i}^0=a_i(0)$ and $b_{i}^0=b_i(0)$,
it follows that
$e_{a,i}^0 = e_{b,i}^0 = e_{d,i}^0 = 0$.
Then
    \begin{equation}\label{Ei0_simplify}
        E_i^0=\frac12\rho_1|\delta_\tau e_{a,i}^1|^2+\frac12\rho_2|\delta_\tau e_{b,i}^1|^2.
    \end{equation}
Expanding $a_i$ and $b_i$ at $t=\tau$ by Taylor's formula with integral remainder, and using the definitions of the first-step approximations $a_{i}^1$ and $b_{i}^1$, we obtain
    \begin{equation}\label{e1_taylor}
        e_{a,i}^1=a_i(\tau)-a_{i}^1=\frac{1}{2} \int_{0}^{\tau}(\tau-s)^2 a_i^{(3)}(s)\,\mathrm{d}s,
        \
        e_{b,i}^1=b_i(\tau)-b_{i}^1=\frac{1}{2} \int_{0}^{\tau}(\tau-s)^2 b_i^{(3)}(s)\,\mathrm{d}s.
    \end{equation}
Since $e_{a,i}^0=e_{b,i}^0=0$, one has
$
\delta_\tau e_{a,i}^1=\frac{e_{a,i}^1-e_{a,i}^0}{\tau}=\frac{e_{a,i}^1}{\tau},
\
\delta_\tau e_{b,i}^1=\frac{e_{b,i}^1-e_{b,i}^0}{\tau}=\frac{e_{b,i}^1}{\tau}.
$
Thus, using \eqref{e1_taylor}, we obtain
    \begin{equation}\label{delta_e1_taylor}
        | \delta_\tau e_{a,i}^1 |^2 \le \frac{\tau^3}{180} \int_0^{\tau} | a_i^{(3)}(s) |^2 \,\mathrm{d}s,
        \qquad
        | \delta_\tau e_{b,i}^1 |^2 \le \frac{\tau^3}{180} \int_0^{\tau} | b_i^{(3)}(s) |^2 \,\mathrm{d}s.
    \end{equation}
Now sum \eqref{Ei0_simplify} over $i=1,\dots,N$ and use \eqref{delta_e1_taylor} to get
    \begin{equation*}
        \mathbb E^0
        =
        \sum_{i=1}^N E_i^0
        \lesssim
        \tau ^3 \int_{0}^{\tau} \sum_{i=1}^N (|a_i^{(3)}(s) |^2  + |b_i^{(3)}(s) |^2) \,\mathrm{d}s,
    \end{equation*}
But
$a_i^{(3)}(t)=(\partial_t^3\phi^N(\cdot,t),w_i)$
and
$b_i^{(3)}(t)=(\partial_t^3\psi^N(\cdot,t),v_i)$,
so invoking Lemma \ref{qth_energy} we obtain
\[
\int_{0}^{\tau} \sum_{i=1}^N (|a_i^{(3)}(s) |^2  + |b_i^{(3)}(s) |^2) \,\mathrm{d}s \le \tau \mathbf{E}_3^N  \le C \tau .
\]
Hence
$\mathbb{E}^0\le C\tau^4$.
Substituting this bound into \eqref{E_bound_final} gives 
$$
\max_{0\le m\le M-1}\mathbb{E}^m\le C\tau^4.
$$

Finally, by \eqref{norm_equiv}, one has $|e_{a,i}^n|^2\le C E_i^{n-1}$. Therefore,
\[
E^2_{II}=\|\phi^N(\cdot,t_n)-\phi^{N,\tau}(\cdot,t_n)\|^2_{2} = \sum_{i=1}^N |e_{a,i}^n|^2 \le C \sum_{i=1}^N E_i^{n-1}=C\mathbb{E}^{n-1}\le C\tau^4,
\]
which together with the spatial error estimate \eqref{est_I}, yields
\[
\|\phi(\cdot,t_n)-\phi^{N,\tau}(\cdot,t_n)\|_{2}
\le C\left(N^{-q}+\tau^2\right),
\quad 0\le n\le M.
\]
The estimate for the $\psi$-component can be proved in the same way.
\end{proof}
\section{Numerical experiments}\label{s5}
In this section, we give two numerical experiments. The first one verifies the convergence rates predicted by Theorem~\ref{thm:full_convergence}, while the second compares the nonlocal and local models and highlights the energy dissipation and memory effects induced by the Volterra term. All numerical experiments were carried out on a desktop computer equipped with an Intel Core i9-10900K CPU and implemented in Baltamatica (version 2.0).
\begin{example}\label{ex1}
We take
$
T=l=\rho_1=\rho_2=\kappa=b=1$ and 
$g(t)=\exp\!\left(-\frac{129}{128}t\right)
$
(the choice of the kernel function \(g\) is motivated by \cite[Example 3.7]{alves2019modeling}). The initial conditions in \eqref{IC} are chosen as
\[
\phi_0(x)=\phi_1(x)=x^3(1-x)^3,\ \psi_0(x)=x^4(1-x)^4-\frac{1}{630},\
\psi_1(x)=x^2(1-x)^2-\frac{1}{30},
\]
for $x\in [0,1]$. It is easy to verify that $(\phi_0,\phi_1,\psi_0,\psi_1)\in
(H^4\cap H_0^1)\times(H^3\cap H_0^1)\times(H^4\cap H_*^1)\times(H^3\cap H_*^1)$ and that these data satisfy the fourth-order compatibility conditions \eqref{compatibility_even}--\eqref{compatibility_even_1} with \(r=2\), but do not satisfy the fifth-order compatibility conditions.
\end{example}

Since an explicit exact solution is not available for this problem, we use a sufficiently fine numerical solution as the reference solution. More precisely, when testing the temporal convergence rate, we take
$N_{\mathrm{ref}}=2^5$ and $M_{\mathrm{ref}}=2^{13}$, with the corresponding time step denoted by $\tau_{\mathrm{ref}}$.
When testing the spatial convergence rate, we take
$N_{\mathrm{ref}}=2^9$ and $M_{\mathrm{ref}}=2^{13}$.
The discrete errors and experimental convergence rates are then computed accordingly.

\medskip
\noindent
\textbf{(1) Test of temporal convergence rates}

For the temporal meshes
$M=2^6,\,2^7,\,2^8,\,2^9,\,2^{10}$,
we compute the fully discrete solutions
\((\phi^{N_{\mathrm{ref}},\tau},\psi^{N_{\mathrm{ref}},\tau})\),
and compare them with \((\phi^{N_{\mathrm{ref}},\tau_{\mathrm{ref}}},\psi^{N_{\mathrm{ref}},\tau_{\mathrm{ref}}})\)
at the common time nodes. We define the temporal errors by
\begin{align*}
&\mathrm{Err}_\phi^\tau(M):=
\max_{0\le n\le M}
\left\|
\phi^{N_{\mathrm{ref}},\tau_{\mathrm{ref}}}(\cdot,t_n)
-
\phi^{N_{\mathrm{ref}},\tau}(\cdot,t_n)
\right\|_{2},\\
&\mathrm{Err}_\psi^\tau(M):=
\max_{0\le n\le M}
\left\|
\psi^{N_{\mathrm{ref}},\tau_{\mathrm{ref}}}(\cdot,t_n)
-
\psi^{N_{\mathrm{ref}},\tau}(\cdot,t_n)
\right\|_{2}.
\end{align*}
The convergence orders between two consecutive temporal meshes is estimated by
$
\mathrm{Order}_\tau^\phi:=
\frac{\ln\bigl(\mathrm{Err}_\phi^\tau(M_1)/\mathrm{Err}_\phi^\tau(M_2)\bigr)}
{\ln(M_2/M_1)}$ and
$\mathrm{Order}_\tau^\psi:=
\frac{\ln\bigl(\mathrm{Err}_\psi^\tau(M_1)/\mathrm{Err}_\psi^\tau(M_2)\bigr)}
{\ln(M_2/M_1)}$,
where \(M_1\) and \(M_2\) denote two consecutive temporal mesh parameters. The corresponding numerical results are listed in Table \ref{tab:time_convergence_refined}. It can be observed that the centered difference-composite trapezoidal scheme constructed in this paper achieves second-order convergence in time both for \(\phi\) and \(\psi\).

\begin{table}[H]
	\centering
	\caption{Temporal errors and convergence orders}
	\label{tab:time_convergence_refined}
	\begin{tabular}{ccccc}
		\toprule
		\multirow{2}{*}{$M$} & \multicolumn{2}{c}{$\phi$} & \multicolumn{2}{c}{$\psi$} \\
		\cmidrule(lr){2-3}\cmidrule(lr){4-5}
		& $\mathrm{Err}_\phi^\tau(M)$ & $\mathrm{Order}_\tau^\phi$
		& $\mathrm{Err}_\psi^\tau(M)$ & $\mathrm{Order}_\tau^\psi$ \\
		\midrule
		$2^6$    & $1.15\mathrm{E}{-05}$ & --   & $1.20\mathrm{E}{-05}$ & --   \\
		$2^7$    & $2.86\mathrm{E}{-06}$ & $2.00$ & $2.99\mathrm{E}{-06}$ & $2.01$ \\
		$2^8$    & $7.15\mathrm{E}{-07}$ & $2.00$ & $7.47\mathrm{E}{-07}$ & $2.00$ \\
		$2^9$    & $1.78\mathrm{E}{-07}$ & $2.01$ & $1.86\mathrm{E}{-07}$ & $2.00$ \\
		$2^{10}$ & $4.40\mathrm{E}{-08}$ & $2.02$ & $4.60\mathrm{E}{-08}$ & $2.02$ \\
		\bottomrule
	\end{tabular}
\end{table}

\medskip
\noindent
\textbf{(2) Test of spatial convergence rates}

For the numbers of spatial basis functions
$N=2,\,4,\,8,\,16,\,32$,
we compute the fully discrete solutions
\((\phi^{N,\tau_{\mathrm{ref}}},\psi^{N,\tau_{\mathrm{ref}}})\),
and compare them with the reference solution
\((\phi^{N_{\mathrm{ref}},\tau_{\mathrm{ref}}},\psi^{N_{\mathrm{ref}},\tau_{\mathrm{ref}}})\).
We define the spatial errors by
\begin{align*}
&\mathrm{Err}_\phi^N(N):=
\max_{0\le n\le M_{\mathrm{ref}}}
\left\|
\phi^{N_{\mathrm{ref}},\tau_{\mathrm{ref}}}(\cdot,t_n)
-
\phi^{N,\tau_{\mathrm{ref}}}(\cdot,t_n)
\right\|_{2},\\
&\mathrm{Err}_\psi^N(N):=
\max_{0\le n\le M_{\mathrm{ref}}}
\left\|
\psi^{N_{\mathrm{ref}},\tau_{\mathrm{ref}}}(\cdot,t_n)
-
\psi^{N,\tau_{\mathrm{ref}}}(\cdot,t_n)
\right\|_{2}.
\end{align*}
The convergence orders between two consecutive spatial truncation parameters are defined by
$
\mathrm{Order}_N^\phi:=
\frac{\ln\bigl(\mathrm{Err}_\phi^N(N_1)/\mathrm{Err}_\phi^N(N_2)\bigr)}
{\ln(N_2/N_1)}$ and
$
\mathrm{Order}_N^\psi:=
\frac{\ln\bigl(\mathrm{Err}_\psi^N(N_1)/\mathrm{Err}_\psi^N(N_2)\bigr)}
{\ln(N_2/N_1)}$,
where \(N_1\) and \(N_2\) denote two consecutive spatial truncation parameters. The corresponding numerical results are reported in Table \ref{tab:space_convergence_refined}. It can be seen that the errors for both \(\phi\) and \(\psi\) exhibit at least fourth-order convergence in space, which indicates the Fourier-Galerkin spatial discretization provides a highly accurate approximation.

\begin{table}[H]
\centering
\caption{Spatial errors and convergence orders}
\label{tab:space_convergence_refined}
\begin{tabular}{ccccc}
\toprule
\multirow{2}{*}{$N$} & \multicolumn{2}{c}{$\phi$} & \multicolumn{2}{c}{$\psi$} \\
\cmidrule(lr){2-3}\cmidrule(lr){4-5}
& $\mathrm{Err}_\phi^N(N)$ & $\mathrm{Order}_N^\phi$
& $\mathrm{Err}_\psi^N(N)$ & $\mathrm{Order}_N^\psi$ \\
\midrule
$2$  & $2.45\mathrm{E}{-03}$ & --   & $8.26\mathrm{E}{-04}$ & --   \\
$4$  & $2.09\mathrm{E}{-04}$ & $3.56$ & $7.38\mathrm{E}{-05}$ & $3.48$ \\
$8$  & $1.20\mathrm{E}{-05}$ & $4.12$ & $4.45\mathrm{E}{-06}$ & $4.05$ \\
$16$ & $5.80\mathrm{E}{-07}$ & $4.38$ & $2.22\mathrm{E}{-07}$ & $4.33$ \\
$32$ & $2.62\mathrm{E}{-08}$ & $4.47$ & $1.02\mathrm{E}{-08}$ & $4.44$ \\
\bottomrule
\end{tabular}
\end{table}

\begin{example}
In order to investigate the influence of the memory term on the free vibration response of a Timoshenko beam, and to compare the dynamical behavior of the nonlocal model \eqref{Timoshenko} with that of the local model obtained by setting $g\equiv 0$ in \eqref{Timoshenko}, we take
$
T=20, l=\rho_1=\rho_2=\kappa=b=1,
$
and use the same memory kernel as in Example \ref{ex1}. In addition, both models are assigned the same initial data:
\[
\phi_0(x)=\sin(\pi x),\quad \phi_1(x)=0,\quad
\psi_0(x)=0,\quad \psi_1(x)=0,\quad x\in[0,1].
\]
\end{example}

In the numerical experiment, we take $N=2^6$ and $M=2^{12}$. We compute the fully discrete solutions for both the local and nonlocal models, and compare the time evolution of the midpoint displacement $\phi(1/2,t)$, the midpoint rotation $\psi(1/2,t)$, and the midpoint shear variable
$\eta(1/2,t)$. In addition, in order to further examine the dissipative behavior of the system, we compute the energy of the fully discrete scheme:
    \begin{align*}
            E^{N,\tau}(t_n)
            :=
            \frac12
             \sum_{j=1}^N [\rho_1 |\delta_{2\tau}  a_j^n|^2
            +\rho_2 |\delta_{2\tau} b_j^n|^2
            +b c_j^2 |b_j^n|^2
            +\kappa G(t_n) |d_j^n|^2]
            +\frac{\kappa}{2}\,\mathcal M_n,
    \end{align*}
where 
$
        \mathcal M_n
        :=
        \tau\Big[
        \frac12 g_n \sum_{j=1}^N |d_j^n-d_j^0|^2
        +
        \sum_{m=1}^{n-1} g_{n-m}\sum_{j=1}^N |d_j^n-d_j^m|^2
        \Big]$, $n=1,\dots,M-1.$
        
\begin{figure}[htpb]
\centering

\begin{subfigure}[b]{0.47\textwidth}
    \centering
    \includegraphics[width=\textwidth]{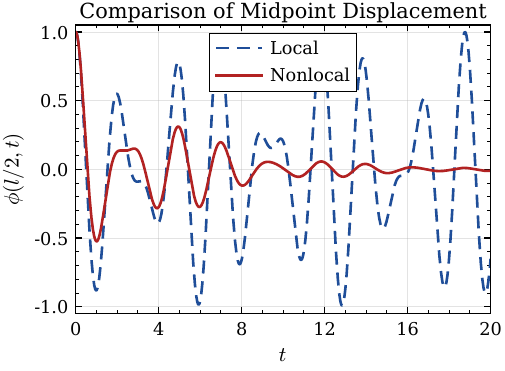}
    \caption{Midpoint displacement $\phi(1/2,t)$}
    \label{fig:mid_disp_compare}
\end{subfigure}
\hfill
\begin{subfigure}[b]{0.47\textwidth}
    \centering
    \includegraphics[width=\textwidth]{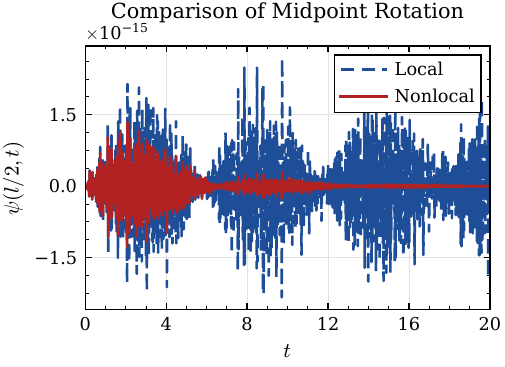}
    \caption{Midpoint rotation $\psi(1/2,t)$}
    \label{fig:mid_rot_compare}
\end{subfigure}

\vspace{0.4cm}

\begin{subfigure}[b]{0.47\textwidth}
    \centering
    \includegraphics[width=\textwidth]{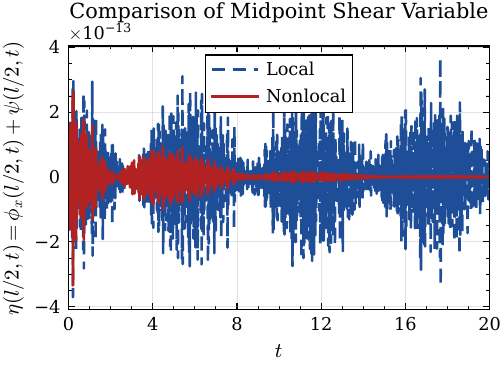}
    \caption{Midpoint shear variable $\eta(1/2,t)$}
    \label{fig:mid_shear_compare}
\end{subfigure}
\hfill
\begin{subfigure}[b]{0.47\textwidth}
    \centering
    \includegraphics[width=\textwidth]{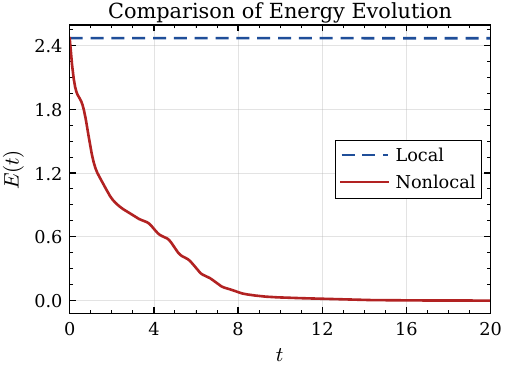}
    \caption{Energy curves}
    \label{fig:energy_compare}
\end{subfigure}

\caption{Comparison of midpoint dynamics and energy evolution.}
\label{fig:midpoint_results}
\end{figure}

The numerical results are shown in Figures \ref{fig:mid_disp_compare}--\ref{fig:energy_compare}. Figures \ref{fig:mid_disp_compare}, \ref{fig:mid_rot_compare} and \ref{fig:mid_shear_compare} show that, in the local model, the midpoint displacement, midpoint rotation and midpoint shear variable continue to oscillate over a certain range, exhibiting the typical behavior of free vibration. In contrast, in the nonlocal model, all three quantities gradually decay in time and eventually approach zero. This indicates that the memory term effectively captures the history dependence and dissipation mechanism of the material, thereby causing the free vibration of the beam to decay over time.

Furthermore, Figure \ref{fig:energy_compare} shows that the energy of the local model remains constant throughout the computation, reflecting its conservative structure, whereas the energy of the nonlocal model decreases monotonically and gradually tends to zero. This is consistent with the theoretical results: when $g\equiv 0$, the system reduces to the classical local Timoshenko model with conserved energy; when $g\not\equiv 0$, the convolution memory term introduces dissipation, causing the total energy of the system to decay over time.

In summary, the local model can only describe the undamped free vibration of an ideal elastic beam, whereas the nonlocal Timoshenko model considered in this paper, owing to the presence of the memory term, is capable of naturally characterizing energy dissipation and vibration decay in viscoelastic materials. It is therefore more physically realistic for describing the dynamical response of beam structures with history-dependent effects.
\section{Future work}\label{s6}

Several directions remain open for future investigation:

\begin{itemize}
    \item[(i)] \textbf{Dissipative structure at the discrete level.}  
    The fully discrete scheme analyzed in this paper is convergent, but a rigorous discrete dissipation theory has not yet been established. It would be interesting either to construct a discrete energy for the present scheme that is consistent with the continuous energy and prove its decay, or to design new structure-preserving schemes with built-in energy dissipation.

    \item[(ii)] \textbf{Higher-order time discretization.}  
    The present method is second-order accurate in time. A natural extension is to develop higher-order time-stepping schemes for memory terms and to establish the corresponding stability and convergence theory.

    \item[(iii)] \textbf{More general memory kernels and long-time dynamics.}  
    It would also be of interest to consider more general memory kernels, such as weakly singular kernels, and to study their influence on well-posedness, decay rates, and long-time dynamical behavior.
\end{itemize}
\bibliographystyle{plain}
\bibliography{Timoshenko_references}
\end{document}